  \theoremstyle{plain}
    \newtheorem{thm}{Theorem}[section]
    \newtheorem{prop}[thm]{Proposition}
   \newtheorem{lemma}[thm]{Lemma}
    \newtheorem{subsec}[thm]{}
\theoremstyle{definition}
    \newtheorem{defn}[thm]{Definition}
        \newtheorem{remark}[thm]{Remark}
    \newtheorem{exam}[thm]{Example}
\theoremstyle{remark}
\title{}
\author{}
\date{}
\begin{document}
\title{Cohomology of BiHom-associative algebras}

\author{Apurba Das}
\address{Department of Mathematics and Statistics,
Indian Institute of Technology, Kanpur 208016, Uttar Pradesh, India.}
\email{apurbadas348@gmail.com}

\subjclass[2010]{16E40, 17A30.}
\keywords{Bihom-associative algebra, Hochschild cohomology, operad, Gerstenhaber algebra, formal deformation, Bihom-$A_\infty$-algebra.}

\begin{abstract}
Bihom-associative algebras have been recently introduced in the study of group hom-categories. In this paper, we introduce a Hochschild type cohomology for bihom-associative algebras with suitable coefficients. The underlying cochain complex (with coefficients in itself) can be given the structure of an operad with a multiplication. Hence, the cohomology inherits a Gerstenhaber structure. We show that this cohomology also control corresponding formal deformations. Finally, we introduce bihom-associative algebras up to homotopy and show that some particular classes of these homotopy algebras are related to the above Hochschild cohomology.
\end{abstract}

\noindent

\thispagestyle{empty}

\maketitle

\section{Introduction}
A pioneer work of Gerstenhaber shows that the Hochschild cohomology of an associative algebra controls the deformation of the given associative structure \cite{gers-def}. He also observes that the Hochschild cohomology inherits a rich structure which is now known as Gerstenhaber algebra \cite{gers}. After that, either or both of these results have been extended in other type of algebras as well.

Recently, hom-type algebras have been studied by many authors. In these algebras, the identities defining the structures are twisted by homomorphisms. The notion of hom-Lie algebras was first appeared in the $q$-deformations of the Witt and Virasoro algebras
\cite{hls}. Other types of algebras (e.g. associative, Leibniz, Poisson, Hopf,...) twisted by homomorphisms including their representations, cohomology and deformations have also been studied. A hom-associative algebra is an algebra $(A, \mu)$ whose associativity is twisted by an algebra homomorphism $\alpha$ in the sense that: $\mu (\alpha (a), \mu (b, c)) = \mu (\mu(a,b), \alpha (c))$, for all $a, b, c \in A$. See \cite{makh-sil, makh-sil2, amm-ej-makh, das1, das2} (and references there in) for more details.

In \cite{caen-goyv} the authors gave a new look to hom-type algebras from a category theoretical point of view. A generalization of this approach led the authors of \cite{gra-makh-men-pana} to introduce bihom-associative algebras. In these algebras, the associativity of the multiplication is twisted by two commuting homomorphisms (see Definition \ref{defn-bihom}). When these two homomorphisms are equal, one recovers hom-associative algebras.  Given an associative algebra $(A, \mu)$ and two commuting algebra morphisms $\alpha, \beta : A \rightarrow A$, the quadruple $(A, \mu \circ (\alpha \otimes \beta), \alpha, \beta)$ is a bihom-associative algebra. The authors in \cite{gra-makh-men-pana} also introduced the notion of a representation for bihom-associative algebras. 
However, the cohomology, structure of the cohomology (if such exists) and deformation of bihom-associative algebras have not yet studied. In this paper, we aim to answer this.

First we define the Hochschild cohomology of a bihom-associative algebra $A$ with coefficients in itself. Using it we define cohomology with coefficients in an arbitrary bimodule representation. The second Hochschild cohomology can be interpreted as equivalence classes of abelian extensions of bihom-associative algebras.
We show that the cochain complex defining the Hochschild cohomology of $A$ (with coefficients in itself) can be given the structure of an operad with a multiplication. The Hochschild coboundary can be seen (up to some sign) as the differential induced by that operad with multiplication. Thus, a classical work of Gerstenhaber and Voronov \cite{gers-voro} suggests that the Hochschild cochain complex carries a homotopy $G$-algebra and the cohomology inherits a Gerstenhaber structure.

In the next, we study formal $1$-parameter deformations of a bihom-associative algebra $A$. This discussion is similar to the classical deformation of associative algebras studied by Gerstenhaber \cite{gers-def}.  
The vanishing of the second cohomology implies that the algebra $A$ is rigid (i.e. all deformations of $A$ are equivalent to the trivial one) and the vanishing of the third cohomology allows one to extend any finite order deformation to a next order deformation.
 
Finally, we introduce a strongly homotopy version of bihom-associative algebras, which we call bihom-$A_\infty$-algebras. This definition is motivated by the classical notion of $A_\infty$-algebras introduced by Stasheff \cite{sta} and Hochschild cohomology of bihom-associative algebras introduced in this paper. 
We classify `skeletal' and `strict' bihom-$A_\infty$-algebras. Skeletal algebras are related to Hochschild cohomology of bihom-associative algebras and strict algebras correspond to crossed modules of bihom-associative algebras.

All vector spaces and linear maps are defined over a field $\mathbb{K}$ of characteristic zero.

\section{Preliminaries}\label{sec3}
In this section we recall bihom-associative algebras and some basics on operad \cite{gra-makh-men-pana, gers-voro, lod-val-book}.

\subsection{Bihom-associative algebras} Let us first recall some motivation from \cite{gra-makh-men-pana} to define bihom-associative algebras. Given a monoidal category $\mathcal{C}$, one consider a new monoidal category $\widetilde{\mathcal{H}} (\mathcal{C})$, called hom-category, whose objects are pairs consisting of an object in $\mathcal{C}$ and an automorphism of this object \cite{caen-goyv}. By considering $\mathcal{C} = {}_{\mathbb{K}}\mathcal{M}$, the category of $\mathbb{K}$-modules, one observes that an algebra in the monoidal category $\widetilde{\mathcal{H}} ({}_{\mathbb{K}}\mathcal{M})$ is a hom-associative algebra $(A, \mu, \alpha)$ with bijective structure map $\alpha$. In \cite{graz} the author extends this construction by adding a group action. More precisely, given a group $\mathcal{G}$, two elements $c, d \in Z (\mathcal{G})$ and an automorphism $\nu$ of the unit object in $\mathcal{C}$, the author constructs the group hom-category $\mathcal{H}^{c, d, \nu} (\mathcal{G}, \mathcal{C})$ whose objects are pairs of the form $(A, f_A)$, where $A$ is an object in $\mathcal{C}$ and $f_A : \mathcal{G} \rightarrow \text{Aut}_\mathcal{C} (A)$ is a group homomorphism. When $\mathcal{G} = \mathbb{Z}, c = d = 1_\mathbb{Z}$ and $\nu = \text{id}$, one recovers the category $\widetilde{\mathcal{H}} (\mathcal{C})$ considered in \cite{caen-goyv}. One may also consider the case $\mathcal{G} = \mathbb{Z} \times \mathbb{Z}, c = (1,0), d = (0,1)$ and $\nu = \text{id}.$ By writing down the axioms for an algebra in the category $\mathcal{H}^{(1,0), (0,1), 1} (\mathbb{Z} \times \mathbb{Z}, {}_{\mathbb{K}}\mathcal{M})$ and relaxing the invertibility of certain maps, one comes to the following notion of a bihom-associative algebra \cite{gra-makh-men-pana}.

\begin{defn}\label{defn-bihom}
	A bihom-associative algebra over $\mathbb{K}$ is a quadruple $(A, \mu, \alpha, \beta)$ consisting of a vector space $A$ together with a bilinear map
	$\mu : A \times A \rightarrow A$ and two commuting linear maps $\alpha, \beta : A \rightarrow A$ satisfying $\alpha (\mu (a,b)) = \mu(\alpha (a), \alpha (b))$, $\beta (\mu (a,b)) = \mu(\beta (a), \beta (b))$ and the following bihom-twisted associativity
	\begin{align}\label{hom-ass-cond}
	\mu ( \alpha (a) , \mu ( b , c) ) = \mu ( \mu (a , b) , \beta (c)), ~~ \text{ for all } a, b, c \in A.
	\end{align}
\end{defn}

When $\alpha = \beta$, one recovers the notion of a hom-associative algebra. When they both are identity map, one gets the definition of a classical associative algebra. Thus, they are very first examples of bihom-associative algebras.

\begin{exam}
Let $(A, \mu)$ be an associative algebra and $\alpha, \beta : A \rightarrow A$ be two commuting algebra morphisms. Then $(A, \mu \circ (\alpha \otimes \beta), \alpha, \beta)$ is a bihom-associative algebra, called `Yau twist'.

Any bihom-associative algebra $(A, \mu, \alpha, \beta)$ with $\alpha$ and $\beta$ are bijective arises in this way.
\end{exam}

\begin{exam}
Let $A = (A, \mu, \alpha, \beta)$ and $A' = (A' , \mu', \alpha', \beta')$ be two bihom-associative algebras. Then $A \otimes A' = (A \otimes A', \mu \otimes \mu', \alpha \otimes \alpha', \beta \otimes \beta')$ is a bihom-associative algebra, called the tensor product of $A$ and $A'$.
\end{exam}

\begin{exam}
Let $ (A, \mu, \alpha, \beta)$ be a bihom-associative algebra. Then $(M_n (A), M_n (\mu), M_n (\alpha), M_n (\beta))$ is also a bihom-associative algebra whose multiplication $M_n (\mu)$ is given by the matrix multiplication induced by $\mu$ and the structure map $M_n (\alpha)$ (resp. $M_n (\beta)$) is the map $\alpha$ (resp. $\beta$) on each entry of the matrix.
\end{exam}

 Let $A = \mathbb{K}[X]$ be the algebra of polynomials in $X$ and consider the algebra morphisms $\alpha, \beta : A \rightarrow A$ defined by the equalities $\alpha (X) = X^2$ and $\beta = \text{id}_A$. Then it is shown in \cite{gra-makh-men-pana} that the bihom-associative algebra $(A, \mu \circ (\alpha \otimes \beta) , \alpha, \beta)$ cannot be expressed as a hom-associative algebra. Thus, the category of bihom-associative algebras is larger than the category of hom-associative algebras.

See \cite{gra-makh-men-pana} for some more examples.

A morphism $\phi : (A, \mu, \alpha, \beta) \rightarrow (A', \mu', \alpha', \beta')$ between bihom-associative algebras is a linear map $\phi: A \rightarrow A'$ which commutes with all respective structure maps.

\begin{remark}
Let $(A, \mu, \alpha, \beta)$ be a bihom-associative algebra. If we set $a \cdot b = \mu (a, b)$, for $a, b \in A$, then for any $n \geq 3$ and $a_1, \ldots , a_n \in A$, we have
\begin{align*}
\alpha^{n-2} a_1 \cdot \big(  \cdots (\alpha^2 a_{n-3} \cdot ( \alpha a_{n-2} \cdot (a_{n-1} \cdot a_n))) \cdots \big) = ( \cdots ((( a_1 \cdot a_2 ) \cdot  \beta a_3 ) \cdot \beta^2 a_4) \cdots ) \cdot \beta^{n-2} a_n.
\end{align*}
This is clearly true for $n = 3$. Using induction, one can show that it holds for any $n \geq 3$.
\end{remark}

\begin{defn} A left representation of a bihom-associative algebra $A = (A, \mu, \alpha, \beta)$ consists of a quadruple $M = (M, \mu_l, \alpha_M, \beta_M)$ of a vector space $M$, a bilinear map $\mu_l : A \otimes M \rightarrow M$, $a \otimes m \mapsto a \cdot m$ and two commuting linear  maps $\alpha_M , \beta_M : M \rightarrow M$ satisfying
\begin{align*}
\alpha_M (a \cdot m) = \alpha (a) \cdot \alpha_M (m), ~~~ \beta_M (a \cdot m) = \beta (a) \cdot \beta_M (m) ~~~ \text{ and } ~~~ \alpha (a) \cdot (b \cdot m) = \mu (a, b) \cdot \beta_M (m).
\end{align*}
\end{defn}
A right representation $M = (M, \mu_r, \alpha_M, \beta_M)$ can be defined in a similar way. A bimodule is a quintuple $M = (M, \mu_l, \mu_r, \alpha_M, \beta_M)$ such that $(M, \mu_l, \alpha_M, \beta_M)$ is a left representation and $(M, \mu_r, \alpha_M, \beta_M)$ is a right representation and the following equality holds:
\begin{align*}
\alpha(a) \cdot (m \cdot b) = (a \cdot m) \cdot \beta (b).
\end{align*}

Any bihom-associative algebra $A = (A, \mu, \alpha, \beta)$ is a bimodule over itself with $\mu_l = \mu_r = \mu$ and $\alpha_M = \alpha$, $\beta_M = \beta$. When there is an $A$-bimodule $M = (M, \mu_l, \mu_r, \alpha_M, \beta_M)$, we denote both $\mu_l$ and $\mu_r$ by $\mu$ if there is no confusion and simply say $M = (M, \mu, \alpha_M, \beta_M)$ is an $A$-bimodule. Sometimes we will also use $\cdot$ (dot) to denote either of the left or right action of $A$ on $M$.

A morphism between $A$-bimodules $M$ and $N$ is given by a linear map $\psi : M \rightarrow N$ which commutes with respective structure maps of $M$ and $N$, and also preserves left and right $A$ actions.

If $M = (M, \mu, \alpha_M, \beta_M)$ is an $A$-bimodule, one can define a bihom-associative algebra structure on $M \oplus A$. The product and structure maps are given by
\begin{align*}
\overline{\mu} ((m, a), (n, b)) =~& (m \cdot b + a \cdot n,~ \mu (a, b)),\\
\overline{\alpha}((m, a)) = (\alpha_M (a), \alpha (a)) ~~~ &\text{~~~ and ~~~} ~~~ \overline{\beta} ((m, a)) = (\beta_M (m), \beta (a)).
\end{align*}
This is called the semi-direct product bihom-associative algebra and is denoted by $M \rtimes A.$

\subsection{Operads with multiplication}\label{subsec-operad-mul} Here we recall some necessary backgroud on non-symmetric operads \cite{gers-voro}. For more details on (non-symmetric) operads, see \cite{lod-val-book}.
\begin{defn}
	A  non-symmetric operad (non-$\sum$ operad in short) in the category of vector spaces is a collection of vector spaces $\{ \mathcal{O} (k) |~ k \geq 1 \}$ together with compositions
	\begin{align*}
	\gamma : \mathcal{O} (k) \otimes \mathcal{O} (n_1) \otimes \cdots \otimes \mathcal{O} (n_k) \rightarrow \mathcal{O} (n_1 + \cdots + n_k) ,~~
	f \otimes g_1 \otimes \cdots \otimes g_k  \mapsto  \gamma (f ; g_1 , \ldots, g_k )
	\end{align*}
	 which is associative in the sense that 
\begin{align*}
\gamma \big(&  \gamma (f; g_1 , \ldots, g_k); h_1 , \ldots, h_{n_1 + \cdots + n_k}    \big) \\
&= \gamma \big( f; ~\gamma (g_1 ; h_1, \ldots, h_{n_1}), ~ \gamma ( g_2 ; h_{n_1 + 1}, \ldots, h_{n_1 + n_2}) , \ldots, ~\gamma (g_k ; h_{n_1 + \cdots + n_{k-1}+1}, \ldots, h_{n_1 + \cdots + n_k})  \big)
\end{align*}	 
and there is an identity element id$ \in \mathcal{O} (1)$ such that
$\gamma (f ; \underbrace{\text{id}, \ldots, \text{id}}_{k \text{ times}} ) = f = \gamma (\text{id}; f)$, for  $f \in \mathcal{O} (k)$. 
\end{defn}

A non-$\sum$ operad can also be described by partial compositions
$$\circ_i : \mathcal{O}(m) \otimes \mathcal{O}(n) \rightarrow \mathcal{O}(m+n-1), \quad 1 \leq i \leq m$$
satisfying
$$ \begin{cases} (f \circ_i g) \circ_{i+j-1} h = f \circ_i (g \circ_j h), \quad &\mbox{~~~ for } 1 \leq i \leq m, ~1 \leq j \leq n, \\ (f \circ_i g) \circ_{j+n-1} h = (f \circ_j h) \circ_i g, \quad  & \mbox{~~~ for } 1 \leq i < j \leq m, \end{cases}$$
for $f \in \mathcal{O}(m), ~ g \in \mathcal{O}(n), ~ h \in \mathcal{O}(p),$
and an identity element satisfying $ f \circ_i \text{id} = f =\text{id}\circ_1 f,$ for all $f \in \mathcal{O}(m)$ and $1 \leq i \leq m$. The two definitions of non-$\sum$ operad are related by
\begin{align}
f \circ_i g =~& \gamma (f ;~ \overbrace{\text{id}, \ldots, \text{id} , \underbrace{g}_{i\text{-th place}}, \text{id}, \ldots, \text{id}}^{m\text{-tuple}}),~~~ \quad \text{ for }f \in \mathcal{O}(m),\label{eqn-1}\\
\gamma (f ; g_1, \ldots, g_k) =~&   (\cdots ((f \circ_k g_k) \circ_{k-1} g_{k-1}) \cdots ) \circ_1 g_1 , \quad \text{ for }f \in \mathcal{O}(k). \label{eqn-2}
\end{align}

\medskip

Next, consider the graded vector space $\mathcal{O} = \oplus_{k \geq 1} \mathcal{O}(k)$ of an operad. If $f \in \mathcal{O}(n)$, we define
 $|f| = n-1$. We will use the same notation for any graded vector space as well. Consider the braces
\begin{align}\label{gers-voro-brace}
\{ f \} \{ g_1, \ldots, g_n\} := \sum (-1)^\epsilon ~ \gamma (f ; \text{id}, \ldots, \text{id}, g_1, \text{id}, \ldots, \text{id}, g_n, \text{id}, \ldots, \text{id})
\end{align}
where the summation runs over all possible substitutions of $g_1, \ldots, g_n$ into $f$ in the prescribed order and $\epsilon := \sum_{p=1}^{n} |g_p| i_p$. Here $i_p$ is the total number of inputs in front of $g_p$. The multilinear braces $\{f\} \{ g_1, \ldots, g_n\}$ are homogeneous of degree $-n$. 
We use the conventions that
$\{f\}\{~ \} := f$   and $f \circ g := \{f\} \{ g\}.$
The braces (\ref{gers-voro-brace}) satisfy certain higher pre-Lie identities which in particular imply that
\begin{align}\label{pre-lie-iden}
(f \circ g) \circ h - f \circ (g \circ h) = (-1)^{|g||h|} ((f \circ h) \circ g - f \circ (h \circ g)).
\end{align}
This implies that the bracket
\begin{align}\label{lie-brckt}
[f,g] := f \circ g - (-1)^{|f||g|} g \circ f, ~~~ \text{ for } f , g \in \mathcal{O},
\end{align}
defines a degree $-1$ graded Lie bracket on $\mathcal{O}$.

\begin{defn}
	A multiplication on an operad $\mathcal{O}$ is an element $m \in \mathcal{O} (2)$ such that $m \circ m = 0$.
\end{defn}

If $m$ is a multiplication on an operad $\mathcal{O}$, then the dot product
\begin{center}
$f \cdot g = (-1)^{|f| + 1} \{ m \} \{ f, g\}, ~~~ f , g \in \mathcal{O},$
\end{center}
defines a graded associative algebra structure on $\mathcal{O}$. Moreover, the degree one map $d : \mathcal{O} \rightarrow \mathcal{O}$, $f \mapsto m \circ f - (-1)^{|f|} f \circ m$ is a differential on $\mathcal{O}$ and the triple $(\mathcal{O}, \cdot, d)$ is a differential graded associative algebra. Summarizing the properties of braces on operad, dot product and differential induced by a multiplication, one gets a homotopy $G$-algebra $(\mathcal{O}, \{~\} \{~, \ldots, ~\}, \cdot, d)$. See \cite{gers-voro} for a precise definition of a homotopy $G$-algebra.

As a summary, we get the following.
\begin{thm}\label{gers-voro-thm}
A multiplication on an operad $\mathcal{O}$ defines the structure of a homotopy $G$-algebra on $\mathcal{O} = \oplus \mathcal{O}(n).$
\end{thm}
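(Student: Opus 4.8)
The statement is precisely the theorem of Gerstenhaber and Voronov \cite{gers-voro}, so the plan is not to build anything new but to verify that the structure maps already written down above — the braces (\ref{gers-voro-brace}), the dot product $f \cdot g = (-1)^{|f|+1}\{m\}\{f,g\}$, and the differential $d(f) = m \circ f - (-1)^{|f|} f \circ m$ — satisfy the defining axioms of a homotopy $G$-algebra in the sense of \cite{gers-voro}. Since all the operations are given explicitly, the content of the proof is a list of identity checks, each of which reduces to three inputs only: associativity of the operadic composition $\gamma$, the identity axiom $\gamma(f;\mathrm{id},\ldots,\mathrm{id}) = f = \gamma(\mathrm{id};f)$, and the single relation $m \circ m = 0$.

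The first and main step is to establish the master brace identity: for $f$ and homogeneous $g_1,\ldots,g_r,h_1,\ldots,h_s \in \mathcal{O}$,
\begin{align*}
\{\{f\}\{g_1,\ldots,g_r\}\}\{h_1,\ldots,h_s\} = \sum (-1)^{\xi}\, \{f\}\{h_1,\ldots,\{g_1\}\{\ldots\},\ldots,\{g_r\}\{\ldots\},\ldots,h_s\},
\end{align*}
where the sum runs over all ways of distributing the $h_j$'s, in their given order, either into the gaps of $f$ or into one of the $g_i$'s, and $\xi$ is the associated Koszul sign coming from the exponents $\epsilon$ in (\ref{gers-voro-brace}). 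After expanding every brace into iterated $\gamma$'s via (\ref{gers-voro-brace}), both sides become signed sums over the same set of iterated compositions of $f$ with the $g_i$'s and $h_j$'s; the associativity of $\gamma$ matches these term by term, and the sign computation is bookkeeping of the numbers of inputs preceding each substitution. Specializing to $r = s = 1$ returns the pre-Lie relation (\ref{pre-lie-iden}) and hence the degree $-1$ graded Lie bracket (\ref{lie-brckt}); the full master identity is exactly the homotopy $G$-algebra axiom controlling the braces among themselves.

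The remaining axioms are then obtained by inserting the multiplication $m$ into the outer slots of the master identity and using $m \circ m = 0$. Since $|m| = 1$, we have $[m,m] = 2\,(m \circ m) = 0$, so the graded Jacobi identity for (\ref{lie-brckt}) gives $[m,[m,f]] = \tfrac12 [[m,m],f] = 0$; as $d = [m,-]$, this is $d^2 = 0$. Taking $f = m$ in the master identity and cancelling the terms containing the factor $m \circ m$ shows that $f \cdot g = (-1)^{|f|+1}\{m\}\{f,g\}$ is graded associative, and applying $d$ to a dot product and re-expanding by the same identity (again killing the $m \circ m$ terms) shows that $d$ is a graded derivation of $\cdot$; thus $(\mathcal{O},\cdot,d)$ is a differential graded associative algebra. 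The last batch of homotopy $G$-algebra relations — those describing how $d$ and $\cdot$ interact with the higher braces $\{f\}\{g_1,\ldots,g_n\}$ — come out in precisely the same way, by specializing the master identity with one entry equal to $m$ and invoking $m \circ m = 0$, with no further ideas needed.

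The only genuinely delicate point, and the one I expect to consume most of the work, is the sign bookkeeping: for each pattern of substituting the $h_j$'s and $g_i$'s into $f$ one must track the cumulative exponent assembled from the degrees $|g_i|,|h_j|$ and the number of inputs standing in front of each substituted element, and check that the two sides of each relation carry identical signs. I would tame this by first proving the case $s = 1$ (a single extra insertion) in full detail and then inducting on $s$, so that at each step only one new Koszul transposition enters. Once the signs are under control the verification is complete, and since the argument uses nothing about the particular operad $\mathcal{O}$ beyond its axioms, one may alternatively just appeal to \cite{gers-voro} for the full details.
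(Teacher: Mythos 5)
Your proposal is correct and matches the paper's treatment: the paper states this theorem as a recollection of the Gerstenhaber--Voronov result and gives no proof beyond assembling the braces, the dot product, and the differential $d=[m,-]$ and citing \cite{gers-voro}. Your sketch — the higher pre-Lie (master brace) identity as the core, with all remaining axioms obtained by inserting $m$ and using $m\circ m=0$ — is precisely the internal argument of that reference, so you have simply expanded the citation the paper relies on.
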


Next, we recall Gerstenhaber algebras ($G$-algebras in short).

\begin{defn}
A Gerstenhaber algebra is a graded commutative associative algebra $(\mathcal{A} = \oplus \mathcal{A}^i, \cdot)$ together with a degree $-1$ graded Lie bracket $[-,-]$ on $\mathcal{A}$ satisfying the following graded Leibniz rule
$$[a, b \cdot c] = [a, b] \cdot c + (-1)^{|a|(|b| + 1)} b \cdot [a, c],~~~ \text{ for homogeneous } a, b, c \in \mathcal{A}.$$
\end{defn}

\begin{remark}\label{gers-voro-rem}
 Given any homotopy $G$-algebra $(\mathcal{O}, \{~\} \{~, \ldots, ~\}, \cdot, d)$, the product $\cdot$ induces a graded commutative associative product on the cohomology $H^\bullet (\mathcal{O}, d)$. The degree $-1$ graded Lie bracket as defined in (\ref{lie-brckt}) also passes on to the cohomology $H^\bullet (\mathcal{O}, d)$. Moreover, the induced product and the bracket on the cohomology satisfy the graded Leibniz rule to become a Gerstenhaber algebra \cite{gers-voro}.
\end{remark}

\section{Cohomology}\label{sec-cohomology}
 In this section, we introduce Hochschild cohomology of bihom-associative algebras.
We show that second Hochschild cohomology can be interpreted as equivalence classes of abelian extensions.

\subsection{Cohomology with self coefficients.}\label{subsec-cohomo-self}
Let $(A, \mu, \alpha, \beta)$ be a bihom-associative algebra. For each $n \geq 1$, we define a vector space $C^n_{\text{Hoch}} (A,A) $ consisting of all multilinear maps $f : A^{\otimes n} \rightarrow A$ satisfying $\alpha \circ f = f \circ \alpha^{\otimes n}$ and $\beta \circ f = f \circ \beta^{\otimes n}$.
Define a map $\delta_{\text{Hoch}} : C^n_{\mathrm{Hoch}} (A, A) \rightarrow C^{n+1}_{\mathrm{Hoch}} (A, A)$ by
\begin{align}\label{bihom-diff}
(\delta_{\mathrm{Hoch}} f) (a_1,  \ldots, a_{n+1}) =~& \mu \big(\alpha^{n-1}(a_1) , f(a_2, \ldots , a_{n+1}) \big) \\
 +&  \sum_{i=1}^{n} (-1)^i~ f \big( \alpha(a_1), \ldots, \alpha (a_{i-1}), \mu (a_i , a_{i+1}), \beta (a_{i+2}), \ldots, \beta (a_{n+1}) \big) \nonumber \\
 +& (-1)^{n+1} \mu (f (a_1, \ldots, a_n) , \beta^{n-1} (a_{n+1})), \nonumber
\end{align}
for $a_1, a_2, \ldots, a_{n+1} \in A$.
 The linear map $\alpha$ appears before the term $\mu (a_i, a_{i+1})$ and $\beta$ appears onward. Moreover, in the first term we have the power of $\alpha$ and in the last term we have the power of $\beta$. This is not a guess as it can seem because the differential is
induced by an operad with a multiplication. 
Moreover, this technique avoids a long computation to show $(\delta_{\mathrm{Hoch}})^2 = 0$. Please see subsection \ref{cohomo-bihom} for detailed clarification. (A similar description for the cohomology of hom-associative algebras can be found in \cite{das2}.)  The cohomology of this complex is called the Hochschild cohomology of the bihom-associative algebra $(A, \mu, \alpha, \beta)$ and denoted by $H^\bullet_{\mathrm{Hoch}} (A, A).$ 

When $\alpha = \beta$, one recovers the Hochschild cohomology of hom-associative algebras \cite{amm-ej-makh}. In particular, for $\alpha = \beta = \text{id}$, we get the classical Hochschild cohomology of associative algebras.

\subsection{Cohomology with coefficients in a bimodule} Using the cohomology with self coefficients, we now define cohomology with coefficients in a bimodule.

Let $A = (A, \mu, \alpha, \beta)$ be a bihom-associative algebra and $M = (M, \mu, \alpha_M, \beta_M)$ an $A$-bimodule. The $n$-th Hochschild cochain group of $A$ with coefficients in $M$ is given by
\begin{align*}
C^n_{\mathrm{Hoch}} (A, M) = \big\{ f : A^{\otimes n} \rightarrow M |~ \alpha_M \circ f = f \circ \alpha^{\otimes n} ~\text{~ and ~}~ \beta_M \circ f = f \circ \beta^{\otimes n} \big\}.
\end{align*}
Any map $f \in C^n_{\mathrm{Hoch}} (A, M)$ can be extended to a map $\widetilde{f} \in C^n_{\mathrm{Hoch}}(M \rtimes A , M \rtimes A)$ by
\begin{align*}
\widetilde{f} \big( (m_1, a_1), \ldots, (m_n, a_n) \big) = \big(  f (a_1, \ldots, a_n), 0 \big).
\end{align*}
The map $f$ can be obtained from $\widetilde{f}$ just by restricting it to $A^{\otimes n}.$ Moreover, $\widetilde{f} = 0$ implies that $f = 0$. Observe that the differential $\delta_{\mathrm{Hoch}} (\widetilde{f})$  has the property that it takes $A^{\otimes (n+1)}$ to $M$. We define the coboundary map $\delta_{\mathrm{Hoch}} : C^n_{\mathrm{Hoch}} (A, M) \rightarrow C^{n+1}_{\mathrm{Hoch}} (A, M)$ by
\begin{align*}
\delta_{\mathrm{Hoch}} (f) = (\delta_{\mathrm{Hoch}} (\widetilde{f}) )|_{A^{\otimes (n+1)}}. 
\end{align*}
One can easily observe that $\widetilde{\delta_{\mathrm{Hoch}} (f)} = \delta_{\mathrm{Hoch}} (\widetilde{f}).$ This shows that
\begin{align*}
\widetilde{(\delta_{\mathrm{Hoch}})^2 (f)} = \delta_{\mathrm{Hoch}} ( \widetilde{\delta_{\mathrm{Hoch}} (f)} ) = ({\delta_{\mathrm{Hoch}})^2 (\widetilde{f})} = 0.
\end{align*}
Hence $(\delta_{\mathrm{Hoch}})^2 = 0$. Note that in above we use $\delta_{\mathrm{Hoch}}$ as both the Hochschild coboundary of $A$ with coefficients in $M$ and the Hochschild coboundary of the semi-direct product $M \rtimes A$ with self coefficients. Thus make sure that there is no confusion.
Note that, if we write the expression for $\delta_{\mathrm{Hoch}} (f)$, it is the same as (\ref{bihom-diff}) where 
in the first and last terms $\mu$ denotes
the left and right action of $A$ on $M$, respectively. One could define the coboundary operator in this way and directly verify that it is a square zero map. However, our technique avoids some long computations.

The cohomology of the complex $(C^\bullet_{\mathrm{Hoch}} (A, M) , \delta_{\mathrm{Hoch}})$ is called the Hochschild cohomology of $A$ with coefficients in the $A$-bimodule $M$, and denoted by $H^\bullet_{\mathrm{Hoch}} (A, M).$

When $M = A$ with the obvious bimodule structure, the cohomology coincides with the one defined in subsection  \ref{subsec-cohomo-self}.

Next we observe some basic properties of the Hochschild cohomology of bihom-associative algebras which are similar to the classical case.

\begin{prop}
Let $\phi : A \rightarrow A'$ be a bihom-associative algebra morphism and $M$ an $A'$-bimodule. Then $\phi$ induces an $A$-bimodule structure on $M$ (denoted by $\phi^*M$) and
\begin{align*}
\phi^* : C^n_{\mathrm{Hoch}} (A', M) \rightarrow C^n_{\mathrm{Hoch}} (A, \phi^* M), ~f \mapsto f \circ \phi^{\otimes n} 
\end{align*}
induces a map $\phi^* : H^\bullet_{\mathrm{Hoch}} (A', M) \rightarrow H^\bullet_{\mathrm{Hoch}} (A, \phi^* M).$
\end{prop}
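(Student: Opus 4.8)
The plan is to verify the three separate claims packaged into this proposition: that $\phi$ pulls back the $A'$-bimodule structure on $M$ to an $A$-bimodule structure $\phi^*M$, that the prescribed formula $\phi^*f = f \circ \phi^{\otimes n}$ lands in the correct cochain group, and that $\phi^*$ commutes with the Hochschild coboundary, so that it descends to cohomology.

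First I would construct $\phi^*M$. The underlying vector space is $M$, with left and right actions $a \cdot_{\phi} m := \phi(a) \cdot m$ and $m \cdot_{\phi} a := m \cdot \phi(a)$, and structure maps $\alpha_M, \beta_M$ unchanged. One must check the axioms in the definition of an $A$-bimodule: e.g. $\alpha_M(a \cdot_\phi m) = \alpha_M(\phi(a) \cdot m) = \alpha'(\phi(a)) \cdot \alpha_M(m) = \phi(\alpha(a)) \cdot \alpha_M(m) = \alpha(a) \cdot_\phi \alpha_M(m)$, where we used that $\phi$ is a bihom-morphism (so $\phi \circ \alpha = \alpha' \circ \phi$) and that $M$ is an $A'$-bimodule. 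The associativity-type axioms $\alpha(a) \cdot_\phi (b \cdot_\phi m) = \mu(a,b) \cdot_\phi \beta_M(m)$ and the mixed left-right axiom follow the same way, pushing everything through $\phi$ and invoking $\phi(\mu(a,b)) = \mu'(\phi(a), \phi(b))$ together with the corresponding $A'$-bimodule identity. This is entirely routine.

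Next I would check that $\phi^*f \in C^n_{\mathrm{Hoch}}(A, \phi^*M)$ whenever $f \in C^n_{\mathrm{Hoch}}(A', M)$. We need $\alpha_M \circ (f \circ \phi^{\otimes n}) = (f \circ \phi^{\otimes n}) \circ \alpha^{\otimes n}$ and similarly for $\beta$. Using $\alpha_M \circ f = f \circ (\alpha')^{\otimes n}$ (the defining condition for $f$) and $\phi \circ \alpha = \alpha' \circ \phi$, we compute $\alpha_M \circ f \circ \phi^{\otimes n} = f \circ (\alpha')^{\otimes n} \circ \phi^{\otimes n} = f \circ (\alpha' \circ \phi)^{\otimes n} = f \circ (\phi \circ \alpha)^{\otimes n} = f \circ \phi^{\otimes n} \circ \alpha^{\otimes n}$, as required; the $\beta$-equivariance is identical.

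Finally, the main point: $\delta_{\mathrm{Hoch}} \circ \phi^* = \phi^* \circ \delta_{\mathrm{Hoch}}$ as maps $C^n_{\mathrm{Hoch}}(A', M) \to C^{n+1}_{\mathrm{Hoch}}(A, \phi^*M)$. I would compute $(\delta_{\mathrm{Hoch}}(\phi^*f))(a_1, \ldots, a_{n+1})$ directly from formula (\ref{bihom-diff}), where in $\phi^*M$ the outer $\mu$'s are the pulled-back actions and the structure maps are $\alpha, \beta$. The first term is $\phi(\alpha^{n-1}(a_1)) \cdot f(\phi(a_2), \ldots, \phi(a_{n+1}))$; since $\phi \circ \alpha^{n-1} = (\alpha')^{n-1} \circ \phi$, this equals $\mu'((\alpha')^{n-1}(\phi a_1), f(\phi a_2, \ldots))$, which is the first term of $(\delta_{\mathrm{Hoch}} f)(\phi a_1, \ldots, \phi a_{n+1})$. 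Each middle term $f(\phi \alpha a_1, \ldots, \phi(\mu(a_i, a_{i+1})), \ldots, \phi \beta a_{n+1})$ becomes $f(\alpha' \phi a_1, \ldots, \mu'(\phi a_i, \phi a_{i+1}), \ldots, \beta' \phi a_{n+1})$ using $\phi \circ \alpha = \alpha' \circ \phi$, $\phi \circ \beta = \beta' \circ \phi$, and $\phi \circ \mu = \mu' \circ (\phi \otimes \phi)$; the last term is handled like the first. Summing, one gets exactly $(\delta_{\mathrm{Hoch}} f)(\phi a_1, \ldots, \phi a_{n+1}) = (\phi^*(\delta_{\mathrm{Hoch}} f))(a_1, \ldots, a_{n+1})$. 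Hence $\phi^*$ is a cochain map and induces $\phi^* : H^\bullet_{\mathrm{Hoch}}(A', M) \to H^\bullet_{\mathrm{Hoch}}(A, \phi^*M)$. I do not expect any genuine obstacle here; the only thing to be careful about is keeping track of which structure maps ($\alpha$ vs.\ $\alpha'$) and which products appear at each slot, and systematically using the morphism intertwining relations to move $\phi$ past them — the computation is the natural generalization of the associative-algebra case.
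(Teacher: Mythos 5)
Your proof is correct and supplies exactly the routine verification that the paper omits (the paper states this proposition without proof, as a "basic property similar to the classical case"): pull back the bimodule structure along $\phi$, check equivariance with respect to $\alpha,\beta$ via the intertwining relations $\phi\circ\alpha=\alpha'\circ\phi$, $\phi\circ\beta=\beta'\circ\phi$, and verify termwise that $\phi^*$ commutes with $\delta_{\mathrm{Hoch}}$. The only point worth flagging is that the paper \emph{defines} $\delta_{\mathrm{Hoch}}$ on $C^\bullet_{\mathrm{Hoch}}(A,M)$ via the semi-direct product $M\rtimes A$, so your use of the explicit formula (\ref{bihom-diff}) with the module actions in the first and last terms should cite the paper's remark that the two descriptions agree.
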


\begin{prop}
Let $A$ be a bihom-associative algebra and $\psi : M \rightarrow N$ be a morphism between $A$-bimodules. Then 
\begin{align*}
\psi_* : C^n_{\mathrm{Hoch}} (A, M) \rightarrow C^n_{\mathrm{Hoch}} (A, N), ~ f \mapsto \psi \circ f
\end{align*}
induces a map $\psi_* : H^\bullet_{\mathrm{Hoch}} (A, M) \rightarrow H^\bullet_{\mathrm{Hoch}} (A, N)$. In fact, $H^\bullet_{\mathrm{Hoch}} (A, -)$ is a functor from the category of $A$-bimodules to the category of $\mathbb{K}$-modules.
\end{prop}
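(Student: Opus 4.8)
The plan is to check, in the natural order, that $\psi_*$ is well defined on cochains, that it is a chain map, that it therefore descends to cohomology, and finally that the assignment $M\mapsto H^\bullet_{\mathrm{Hoch}}(A,M)$, $\psi\mapsto\psi_*$ respects identities and composition. All four steps are formal consequences of the definition of a morphism of $A$-bimodules, so the proof is short; the only computational point is the chain-map identity.

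First I would verify that $\psi_*$ sends $C^n_{\mathrm{Hoch}}(A,M)$ into $C^n_{\mathrm{Hoch}}(A,N)$. If $f\in C^n_{\mathrm{Hoch}}(A,M)$ then $\alpha_M\circ f=f\circ\alpha^{\otimes n}$ and $\beta_M\circ f=f\circ\beta^{\otimes n}$; since $\psi$ commutes with the structure maps, $\psi\circ\alpha_M=\alpha_N\circ\psi$ and $\psi\circ\beta_M=\beta_N\circ\psi$, whence $\alpha_N\circ(\psi\circ f)=\psi\circ\alpha_M\circ f=\psi\circ f\circ\alpha^{\otimes n}=(\psi\circ f)\circ\alpha^{\otimes n}$, and similarly with $\beta$. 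Thus $\psi\circ f\in C^n_{\mathrm{Hoch}}(A,N)$ and $\psi_*$ is well defined.

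Next comes the key point: $\psi_*\circ\delta_{\mathrm{Hoch}}=\delta_{\mathrm{Hoch}}\circ\psi_*$. I would apply $\psi$ to the explicit formula $(\ref{bihom-diff})$ for $\delta_{\mathrm{Hoch}}f$, recalling that in the first and last summands $\mu$ denotes the left, resp. right, action of $A$ on $M$. The $n$ middle summands are $f$ precomposed with maps built from $\mu,\alpha,\beta$ on $A$ alone, so $\psi$ slides past them directly onto $f$; for the first summand, $\psi\bigl(\alpha^{n-1}(a_1)\cdot f(a_2,\dots,a_{n+1})\bigr)=\alpha^{n-1}(a_1)\cdot\psi\bigl(f(a_2,\dots,a_{n+1})\bigr)$ because $\psi$ intertwines the left $A$-actions on $M$ and $N$, and the last summand is handled the same way with the right action. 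Collecting terms gives $\psi\circ(\delta_{\mathrm{Hoch}}f)=\delta_{\mathrm{Hoch}}(\psi\circ f)$. Consequently $\psi_*$ carries cocycles to cocycles and coboundaries to coboundaries, hence induces $\psi_*\colon H^\bullet_{\mathrm{Hoch}}(A,M)\to H^\bullet_{\mathrm{Hoch}}(A,N)$. The only thing to be attentive to here is the bookkeeping of which of the two defining properties of a bimodule morphism (commuting with $\alpha_M,\beta_M$ versus intertwining the two $A$-actions) is used in which summand; a more conceptual, computation-light variant routes this step through the extension $f\mapsto\widetilde f$ and the induced morphism $\mathrm{id}_A\oplus\psi\colon M\rtimes A\to N\rtimes A$ of semi-direct products, exactly as in the previous subsection.

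Finally, functoriality is immediate from $\psi_*f=\psi\circ f$: one reads off $(\mathrm{id}_M)_*=\mathrm{id}$ and $(\psi'\circ\psi)_*=\psi'_*\circ\psi_*$ for composable bimodule morphisms $M\xrightarrow{\ \psi\ }N\xrightarrow{\ \psi'\ }P$, and these identities persist after passing to cohomology. Hence $H^\bullet_{\mathrm{Hoch}}(A,-)$ is a functor from the category of $A$-bimodules to the category of $\mathbb{K}$-modules. In short, there is no genuine obstacle: the content of the statement is the chain-map identity of the previous paragraph, and everything else is bookkeeping.
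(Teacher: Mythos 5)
The paper states this proposition without any proof, so there is nothing to compare against line by line; your argument is correct and supplies exactly the standard verification one would expect. The one point worth flagging is that, since the paper \emph{defines} $\delta_{\mathrm{Hoch}}$ on $C^\bullet_{\mathrm{Hoch}}(A,M)$ via the extension $f\mapsto\widetilde f$ to the semi-direct product $M\rtimes A$ (and only remarks afterwards that the result agrees with the explicit formula with $\mu$ replaced by the actions), the cleanest version of your chain-map step is the variant you mention in passing: a bimodule morphism $\psi$ induces a bihom-associative algebra morphism $\psi\oplus\mathrm{id}_A\colon M\rtimes A\to N\rtimes A$, one checks $\widetilde{\psi\circ f}=(\psi\oplus\mathrm{id}_A)\circ\widetilde f$, and the identity $\psi_*\circ\delta_{\mathrm{Hoch}}=\delta_{\mathrm{Hoch}}\circ\psi_*$ follows by restriction. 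Your direct computation with the explicit formula is equally valid and correctly tracks which of the two defining properties of a bimodule morphism is used in which summand; functoriality is, as you say, immediate.
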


\subsection{Abelian extensions.} In this subsection, we show that the second Hochschild cohomology $H^2_{\mathrm{Hoch}} (A, M)$ can be interpreted as equivalence classes of abelian extensions of bihom-associative algebras.

Let $A = (A, \mu, \alpha, \beta)$ be a bihom-associative algebra and $M= (M, \alpha_M, \beta_M)$ be a vector space equipped with two commuting linear maps. Note that $M$ can be considered as a bihom-associative algebra with trivial multiplication.

\begin{defn}
An abelian extension of $A$ by $M$ is an exact sequence of bihom-associative algebras
\[
\xymatrix{
0 \ar[r] &  (M, 0, \alpha_M, \beta_M) \ar[r]^{i} & (E, \mu_E, \alpha_E, \beta_E) \ar[r]^{j} & (A, \mu, \alpha, \beta) \ar[r] \ar@<+4pt>[l]^{s} & 0
}
\]
together with a splitting (given by $s$) which satisfies 
\begin{align}\label{s-property}
\alpha_E \circ s = s \circ \alpha ~~~\text{  and  }~~~ \beta_E \circ s = s \circ \beta.
\end{align}
\end{defn}

An abelian extension induces an $A$-bimodule structure on $(M, \alpha_M, \beta_M)$ via the actions $a \cdot m = \mu_E (s(a), i(m))$ and $m \cdot a = \mu_E (i(m), s(a))$, for $a \in A,~ m \in M.$ One can easily verify that this action is independent of the choice of $s$. 

\begin{remark}
Let $(E, \alpha_E, \beta_E)$ and $(A, \alpha, \beta)$ be two vector spaces equipped with commuting linear maps. Suppose $j : E \rightarrow A$ is a linear surjective map commuting with respective structure maps. Then there might not be a section $s : A \rightarrow E$ of $j$ which commutes with respective structure maps. Take $E = \langle x , y \rangle$ with $\alpha_E (x) = y,~ \alpha_E (y) = 0,~ \beta_E = \text{id}$ and $A = \langle a \rangle$ with $\alpha (a) = 0,~ \beta = \text{id}.$ Take $j (x) = a$ and $j (y) = 0$. Let $s$ be a section for $j$ commuting with respective structure maps. For $s (a) = \lambda x + \nu y$, we have $a = (j \circ s) (a) = \lambda a$, which implies that $\lambda = 1$. Finally,
\begin{align*}
0 = (s \circ \alpha) (a) = (\alpha_E \circ s)(a) = \alpha_E (x + \nu y) = y
\end{align*}
which is a contradiction.
\end{remark}

Two abelian extension are said to be equivalent if there is a morphism $\phi : E \rightarrow E'$ between bihom-associative algebras making the following diagram commute
\[
\xymatrix{
0 \ar[r] &  (M, 0, \alpha_M, \beta_M) \ar[r]^{i} \ar@{=}[d] & (E, \mu_E, \alpha_E, \beta_E) \ar[d]^{\phi} \ar[r]^{j} & (A, \mu, \alpha, \beta) \ar[r] \ar@{=}[d] \ar@<+4pt>[l]^{s} & 0 \\
0 \ar[r] &  (M, 0, \alpha_M, \beta_M) \ar[r]^{i'} & (E', \mu'_E, \alpha'_E, \beta'_E) \ar[r]^{j'} & (A, \mu, \alpha, \beta) \ar[r] \ar@<+4pt>[l]^{s'} & 0 .
}
\]
Note that two extensions with same $i$ and $j$ but different $s$ are always equivalent.

Suppose $M$ is a given $A$-bimodule. We denote by $\mathcal{E}xt (A, M)$ the equivalence classes of abelian extensions of $A$ by $M$ for which the induced $A$-bimodule structure on $M$ is the prescribed one.

The next result is inspired by the classical case.
\begin{prop}
There is a canonical bijection: $H^2_{\mathrm{Hoch}} (A, M) \cong \mathcal{E}xt (A, M).$
\end{prop}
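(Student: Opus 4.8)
The plan is to construct mutually inverse maps between $H^2_{\mathrm{Hoch}}(A,M)$ and $\mathcal{E}xt(A,M)$ in the standard ``factor set'' fashion, adapted to the bihom setting. First I would go from an abelian extension to a cohomology class. Given an extension with section $s$, every element of $E$ decomposes uniquely as $i(m) + s(a)$, so as a vector space $E \cong M \oplus A$; property \eqref{s-property} guarantees that under this identification $\alpha_E$ and $\beta_E$ act diagonally as $\alpha_M \oplus \alpha$ and $\beta_M \oplus \beta$. Define $f : A^{\otimes 2} \to M$ by $i(f(a,b)) = \mu_E(s(a),s(b)) - s(\mu(a,b))$; this is well-defined since $j$ kills the right-hand side. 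Because $s$, $\mu_E$, $\alpha_E$, $\beta_E$ are all compatible with the structure maps and $i$ is injective, one checks $\alpha_M \circ f = f \circ \alpha^{\otimes 2}$ and $\beta_M \circ f = f \circ \beta^{\otimes 2}$, so $f \in C^2_{\mathrm{Hoch}}(A,M)$. Writing out the bihom-twisted associativity \eqref{hom-ass-cond} for $\mu_E$ on the three elements $s(a),s(b),s(c)$, expanding $\mu_E$ on a pair $s(x),s(y)$ as $i(f(x,y)) + s(\mu(x,y))$, and using the induced bimodule actions $a\cdot m = \mu_E(s(a),i(m))$, $m\cdot a = \mu_E(i(m),s(a))$ together with the compatibility of $s$ with $\alpha,\beta$, one obtains exactly the cocycle condition $\delta_{\mathrm{Hoch}} f = 0$, matching \eqref{bihom-diff} in degree $2$ (with $\mu$ in the outer terms read as the module actions, as explained after the definition of $\delta_{\mathrm{Hoch}}$ on $C^\bullet_{\mathrm{Hoch}}(A,M)$). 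Changing the section $s$ to $s' = s + i\circ g$ for a linear $g : A \to M$ that (by the argument of the preceding Remark applied in reverse — here a section compatible with structure maps exists by hypothesis) lies in $C^1_{\mathrm{Hoch}}(A,M)$ changes $f$ by $\delta_{\mathrm{Hoch}} g$, and an equivalence $\phi : E \to E'$ likewise produces cohomologous cocycles; this shows the assignment descends to a well-defined map $\mathcal{E}xt(A,M) \to H^2_{\mathrm{Hoch}}(A,M)$.

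Conversely, given a $2$-cocycle $f \in C^2_{\mathrm{Hoch}}(A,M)$ I would put $E = M \oplus A$ with $\mu_E((m,a),(n,b)) = (a\cdot n + m\cdot b + f(a,b),\ \mu(a,b))$, $\alpha_E = \alpha_M \oplus \alpha$, $\beta_E = \beta_M \oplus \beta$, the obvious inclusion $i$, projection $j$, and section $s(a) = (0,a)$. The two conditions $\alpha_M\circ f = f\circ\alpha^{\otimes 2}$, $\beta_M\circ f = f\circ\beta^{\otimes 2}$ (plus the corresponding conditions built into the bimodule axioms for $M$) give multiplicativity of $\alpha_E,\beta_E$ with respect to $\mu_E$; the cocycle identity $\delta_{\mathrm{Hoch}} f = 0$ is precisely the bihom-twisted associativity \eqref{hom-ass-cond} for $\mu_E$ — the three ``inner'' terms of \eqref{bihom-diff} reproduce the $f$-contributions while the bimodule axioms ($\alpha(a)\cdot(b\cdot n) = \mu(a,b)\cdot\beta_M(n)$, etc.) handle the purely-$M$ contributions. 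Hence $(E,\mu_E,\alpha_E,\beta_E)$ is a bihom-associative algebra and we get an abelian extension whose induced bimodule structure on $M$ is the given one (immediate from $a\cdot m = \mu_E(s(a),i(m))$). Cohomologous cocycles differing by $\delta_{\mathrm{Hoch}} g$ yield equivalent extensions via $\phi(m,a) = (m + g(a), a)$, so this gives a well-defined map $H^2_{\mathrm{Hoch}}(A,M) \to \mathcal{E}xt(A,M)$.

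Finally I would check the two composites are identities: starting from a cocycle $f$, building $E$, and choosing the tautological section recovers $f$ on the nose; starting from an extension, choosing a section, extracting $f$, and rebuilding $M\oplus A$ gives an algebra isomorphic to $E$ over $A$ and under $M$ via $(m,a)\mapsto i(m)+s(a)$, which is an equivalence of extensions. I expect the only real work to be the bookkeeping in the equivalence $\delta_{\mathrm{Hoch}} f = 0 \Longleftrightarrow \eqref{hom-ass-cond}\text{ for }\mu_E$: one must carefully track where the powers $\alpha^{n-1}$, $\beta^{n-1}$ and the interior $\alpha$'s and $\beta$'s in \eqref{bihom-diff} go — in degree $2$ these powers are trivial, so the $n=2$ case of $\delta_{\mathrm{Hoch}}$ reads $(\delta f)(a,b,c) = a\cdot f(b,c) - f(\mu(a,b),\beta(c)) + f(\alpha(a),\mu(b,c)) - f(a,b)\cdot\beta(c)$, wait, more precisely the interior argument placement must be matched term-by-term with the terms of the twisted associativity for $\mu_E$, and the sign conventions reconciled. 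This term-matching, together with confirming that the bimodule axioms for $M$ supply exactly the identities needed for the $M$-valued components, is the crux; everything else is formal and parallels the classical Hochschild computation.
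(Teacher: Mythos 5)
Your proposal follows essentially the same factor-set construction as the paper's proof (the same formula for $\mu_E$ on $M\oplus A$, the same correspondence between $2$-cocycles and extensions, and the same equivalence $(m,a)\mapsto(m+g(a),a)$ relating cohomologous cocycles), so it is correct in approach and substance. One small correction to the point you flag yourself at the end: for $n=2$ the outer powers $\alpha^{n-1}=\alpha$ and $\beta^{n-1}=\beta$ are \emph{not} trivial, so the degree-$2$ coboundary reads $(\delta_{\mathrm{Hoch}}f)(a,b,c)=\alpha(a)\cdot f(b,c)-f(\mu(a,b),\beta(c))+f(\alpha(a),\mu(b,c))-f(a,b)\cdot\beta(c)$, and its vanishing is exactly the bihom-twisted associativity of $\mu_E$.
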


\begin{proof}
Given a $2$-cocycle $f \in C^2_{\mathrm{Hoch}} (A, M)$, we consider the $k$-module $E = M \oplus A$ with the following structure maps
\begin{align*}
{\mu}_E ((m, a), (n, b)) =~& (m \cdot b + a \cdot n + f (a, b),~ \mu (a, b)),\\
{\alpha}_E((m, a)) = (\alpha_M (a), \alpha (a)) ~~~ &\text{~~~ and ~~~} ~~~ {\beta}_E ((m, a)) = (\beta_M (m), \beta (a)).
\end{align*}
(Observe that when $f =0$ this is the semi-direct product.) Using the fact that $f$ is a $2$-cocycle, it is easy to verify that $(E, \mu_E, \alpha_E, \beta_E)$ is a bihom-associative algebra. Moreover, $0 \rightarrow M \rightarrow E \rightarrow A \rightarrow 0$ defines an abelian extension with the obvious splitting. Let $(E' = M \oplus A, \mu_E', \alpha_E, \beta_E)$ be the corresponding bihom-associative algebra associated to the cohomologous $2$-cocycle $f - \delta_{\mathrm{Hoch}} (g)$, for some $g \in C^1_{\mathrm{Hoch}} (A, M)$. The equivalence between abelian extensions $E$ and $E'$ is given by $E \rightarrow E'$, $(m, a) \mapsto (m + g (a), a)$. Therefore, the map $H^2_{\mathrm{Hoch}} (A, M) \rightarrow \mathcal{E}xt (A, M) $ is well defined.

Conversely, given an extension 
$0 \rightarrow M \xrightarrow{i} E \xrightarrow{j} A \rightarrow 0$ with splitting $s$, we may consider $E = M \oplus A$ and $s$ is the map $s (a) = (0, a).$ With respect to the above splitting, the maps $i$ and $j$ are the obvious ones. Moreover, the property (\ref{s-property}) implies that $\alpha_E = (\alpha_M, \alpha)$ and $\beta_E = (\beta_M, \beta).$ Since $j \circ \mu_E ((0, a), (0, b)) = \mu (a, b)$ as $j$ is an algebra map, we have $\mu_E ((0, a), (0, b)) = (f (a, b), \mu (a, b))$, for some $f \in C^2_{\mathrm{Hoch}} (A, M).$ The bihom-associativity of $\mu_E$ implies that $f$ is a $2$-cocycle. Similarly, one can observe that any two equivalent extensions are related by a map $E = M \oplus A \xrightarrow{\phi} M \oplus A = E'$, $(m, a) \mapsto (m + g(a), a)$ for some $g \in C^1_{\mathrm{Hoch}} (A, M)$. Since $\phi$ is an algebra morphism, we have
\begin{align*}
\phi \circ \mu_E ((0, a), (0, b)) = \mu'_{E} (\phi (0, a) , \phi (0, b))
\end{align*}
which implies that $f' (a, b) = f (a, b) - (\delta_{\mathrm{Hoch}} g)(a, b)$. Here $f'$ is the $2$-cocycle induced by the extension $E'$. This shows that the map $\mathcal{E}xt (A, M) \rightarrow H^2_{\mathrm{Hoch}} (A, M)$ is well defined. Moreover, these two maps are inverses to each other.
\end{proof}

In the following, we will be mostly interested in the Hochschild cohomology of $A$ with coefficients in itself. We will see that the second Hochschild cohomology $H^2_{\mathrm{Hoch}}(A, A)$ plays an important role in the deformation theory of $A$ (see section \ref{sec-def}).

\section{Gerstenhaber structure on cohomology}
In this section, we show that the cochain complex defining the Hochschild cohomology of a bihom-associative algebra inherits a structure of an operad with a multiplication. Hence the results of Subsection \ref{subsec-operad-mul} imply that the cohomology inherits a Gerstenhaber structure. 

\subsection{Bi-twisted endomorphism operad}

Let $A$ be a vector space and $\alpha, \beta : A \rightarrow A$ be two commuting linear maps. For each $k \geq 1$, we define $C^k_{\alpha, \beta} (A, A)$ to be the space of all multilinear maps $f : A^{\otimes k} \rightarrow A$ satisfying $\alpha \circ f = f \circ \alpha^{\otimes k} $ and $\beta \circ f = f \circ \beta^{\otimes k}.$
We define an operad structure on $ \{ \mathcal{O}(k) |~ k \geq 1 \}$ where $\mathcal{O}(k) = C^k_{\alpha, \beta} (A, A)$, for $k \geq 1$. For $1 \leq i \leq m$,
we define partial compositions $\circ_i : \mathcal{O}(m) \otimes \mathcal{O}(n) \rightarrow \mathcal{O}(m+n-1)$ by
$$(f \circ_i g)(a_1, \ldots, a_{m+n-1}) = f ( \alpha^{n-1}a_1, \ldots, \alpha^{n-1}a_{i-1}, g (a_i, \ldots, a_{i+n-1}), \beta^{n-1} a_{i+n}, \ldots, \beta^{n-1} a_{m+n-1}),$$
for $f \in \mathcal{O}(m),~ g \in \mathcal{O}(n)$ and $a_1, \ldots, a_{m+n-1} \in A$. 

\begin{prop}\label{hom-ass-operad}
	The partial compositions ~$\circ_i$ define a non-$\sum$ operad structure on $C^\bullet_{\alpha, \beta} (A,A)$ with the identity element given by the identity map {\em id} $\in C^1_{\alpha, \beta} (A, A)$.
\end{prop}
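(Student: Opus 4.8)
The plan is to verify the two partial-composition axioms together with the unit axiom directly on a general tuple of arguments; the entire content lies in tracking how the powers of $\alpha$ and $\beta$ attached to each slot accumulate, so once a clean index convention is fixed there is no conceptual difficulty.

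\textbf{Well-definedness and the unit.} First I would check that $f \circ_i g$ again lies in $C^{m+n-1}_{\alpha,\beta}(A,A)$ for $f \in \mathcal{O}(m)$ and $g \in \mathcal{O}(n)$. Applying $\alpha$ on the outside of $(f \circ_i g)(a_1,\dots,a_{m+n-1})$ and using $\alpha \circ f = f \circ \alpha^{\otimes m}$ distributes an $\alpha$ over the $m$ slots of $f$: on a slot carrying $\alpha^{n-1}a_k$ this yields $\alpha^{n-1}(\alpha a_k)$; on a slot carrying $\beta^{n-1}a_k$ it yields $\alpha\beta^{n-1}a_k = \beta^{n-1}(\alpha a_k)$, which is exactly where the commutativity $\alpha\beta = \beta\alpha$ is used; and on the slot carrying $g(a_i,\dots,a_{i+n-1})$ it yields $\alpha\, g(\dots) = g(\alpha a_i,\dots,\alpha a_{i+n-1})$ by equivariance of $g$. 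Reassembling gives $(f \circ_i g)(\alpha a_1,\dots,\alpha a_{m+n-1})$, and the computation with $\beta$ is identical. The unit axiom is immediate: $\mathrm{id}$ trivially lies in $C^1_{\alpha,\beta}(A,A)$, and since $\alpha^0 = \beta^0 = \mathrm{id}$ the defining formula collapses to $f \circ_i \mathrm{id} = f$ and $\mathrm{id} \circ_1 f = f$.

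\textbf{The sequential identity.} For $(f \circ_i g) \circ_{i+j-1} h = f \circ_i (g \circ_j h)$ with $f \in \mathcal{O}(m)$, $g \in \mathcal{O}(n)$, $h \in \mathcal{O}(p)$ and $1 \le j \le n$, I would evaluate both sides on $(a_1,\dots,a_{m+n+p-2})$. On the left one substitutes $h$ into $f \circ_i g$ at place $i+j-1$, which lies inside the $g$-block since $1 \le j \le n$, and then expands $f \circ_i g$: the slots of $f$ to the left of $i$ first pick up $\alpha^{n-1}$ (from inserting $g$, of arity $n$) and then $\alpha^{p-1}$ (from inserting $h$, of arity $p$), giving $\alpha^{n-1}\alpha^{p-1} = \alpha^{n+p-2}$; the $n$ entries of the $g$-block, now containing the $p$ entries of $h$ in the prescribed spot with the prescribed powers of $\alpha$ and $\beta$, reorganize verbatim into $(g \circ_j h)(a_i,\dots,a_{i+n+p-2})$ by the definition of $\circ_j$; and the slots of $f$ to the right of $i$ pick up $\beta^{n-1}\beta^{p-1} = \beta^{n+p-2}$. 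Since $g \circ_j h$ has arity $n+p-1$, the right-hand side produces exactly the same data, so the two sides agree.

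\textbf{The parallel identity and the main obstacle.} For $(f \circ_i g) \circ_{j+n-1} h = (f \circ_j h) \circ_i g$ with $1 \le i < j \le m$, the same bookkeeping shows that on both sides the slots of $f$ strictly left of $i$ end up with $\alpha^{n+p-2}$ and those strictly right of $j$ with $\beta^{n+p-2}$, the $g$-block enters slot $i$ of $f$ as $\alpha^{p-1}g(\dots) = g(\alpha^{p-1}\cdot,\dots)$, and the $h$-block enters slot $j$ of $f$ as $\beta^{n-1}h(\dots) = h(\beta^{n-1}\cdot,\dots)$; in each case one side produces the expression directly while the other reaches it through the equivariance of $g$ (resp. $h$). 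The one genuinely non-formal point is that a slot of $f$ lying strictly between $i$ and $j$ acquires both $\alpha^{p-1}$ and $\beta^{n-1}$, composed in one order on the left-hand side and in the opposite order on the right-hand side; these agree precisely because $\alpha$ and $\beta$ commute. I expect this index-and-exponent bookkeeping — especially making the exponents visibly add in the sequential case and making $\alpha\beta = \beta\alpha$ visibly do its job on the middle slots in the parallel case — to be the only real obstacle; everything else is routine, and Theorem~\ref{gers-voro-thm} then applies once a multiplication on this operad is produced.
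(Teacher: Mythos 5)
Your proposal is correct and follows essentially the same route as the paper: a direct expansion of both sides of the two partial-composition axioms, tracking how the exponents of $\alpha$ and $\beta$ accumulate on each slot, with the commutativity $\alpha\beta=\beta\alpha$ invoked exactly where you place it (the slots of $f$ strictly between $i$ and $j$ in the parallel identity) and the equivariance of $g$ and $h$ used to move powers in and out of the inner blocks. The only difference is that you additionally spell out the well-definedness check that $f\circ_i g$ again lies in $C^{m+n-1}_{\alpha,\beta}(A,A)$, which the paper leaves implicit.
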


\begin{proof}
For $f \in C^m_{\alpha, \beta} (A, A),~ g \in C^n_{\alpha, \beta} (A, A),~ h \in C^p_{\alpha, \beta} (A,A)$ and $1 \leq i \leq m,~ 1 \leq j \leq n$, we have
\begin{align*}
&((f \circ_i g) \circ_{i+j-1} h)(a_1, \ldots, a_{m+n+p-2}) \\
&= (f \circ_i g) \big(\alpha^{p-1} a_1, \ldots, \alpha^{p-1} a_{i+j-2} ,~ h (a_{i+j-1}, \ldots, a_{i+j+p-2}), \ldots, \beta^{p-1} a_{m+n+p-2} \big) \\
&= f \big(  \alpha^{n+p-2}a_1, \ldots, \alpha^{n+p-2} a_{i-1}, g \big( \alpha^{p-1}a_i, \ldots, h (a_{i+j-1}, \ldots, a_{i+j+p-2}), \ldots, \beta^{p-1} a_{i+n+p-2}  \big),\\
& \hspace*{10cm} \ldots, \beta^{n+p-2} a_{m+n+p-2}    \big) \\
&= f \big(  \alpha^{n+p-2}a_1, \ldots, \alpha^{n+p-2} a_{i-1} , (g \circ_j h) (a_i, \ldots, a_{i+n+p-2}) , \ldots, \beta^{n+p-2} a_{m+n+p-2} \big) \\
&= (f \circ_i (g \circ_j h)) (a_1, \ldots, a_{m+n+p-2}).
\end{align*}
Similarly, for $1 \leq i < j \leq m$, we have
\begin{align*}
&((f \circ_i g) \circ_{j+n-1} h ) (a_1, \ldots, a_{m+n+p-2}) \\
&= (f \circ_i g) \big( \alpha^{p-1} a_1, \ldots, \alpha^{p-1} a_{j+n-2}, h (a_{j+n-1}, \ldots, a_{j+n+p-2}), \beta^{p-1} a_{j+n+p-1}, \ldots, \beta^{p-1} a_{m+n+n-2} \big) \\
&= f \big( \alpha^{n+p-2}a_1, \ldots, \alpha^{n+p-2} a_{i-1} , g (\alpha^{p-1} a_i, \ldots, \alpha^{p-1} a_{i+n-1}), \beta^{n-1} \alpha^{p-1} a_{i+n}, \ldots, \beta^{n-1} \alpha^{p-1} a_{j+n-2},\\
&  \hspace*{4cm}    \beta^{n-1} h(a_{j+n-1}, \ldots, a_{j+n+p-2}), \beta^{n+p-2} a_{j+n+p-1}, \ldots, \beta^{n+p-2} a_{m+n+p-2}  \big) \\
&= (f \circ_j h) \big(  \alpha^{n-1} a_1, \ldots, \alpha^{n-1} a_{i-1},~ g (a_i, \ldots, a_{i+n-1}),~ \beta^{n-1} a_{i+n}, \ldots, \beta^{n-1} a_{m+n+p-2}    \big) \\
&= ((f \circ_j h) \circ_i g ) (a_1, \ldots, a_{m+n+p-2}).
\end{align*}
It is also easy to see that the identity map id is the identity element of the operad. Hence, the proof.
\end{proof}

In view of (\ref{eqn-2}), the corresponding compositions
\begin{center}
$\gamma_{\alpha, \beta} : \mathcal{O} (k) \otimes \mathcal{O} (n_1) \otimes \cdots \otimes \mathcal{O} (n_k) \rightarrow \mathcal{O} (n_1 + \cdots + n_k)$
\end{center}
are given by
\begin{align*}
&\gamma_{\alpha, \beta} (f; g_1, \ldots, g_k) (a_1, \ldots, a_{n_1 + \cdots + n_k})\\
&=~ f \big( \alpha^{\sum_{l=2}^{k} |g_l|} g_1 (a_1, \ldots, a_{n_1}), \ldots, ~ \alpha^{\sum_{l>i} |g_l|} ~ \beta^{\sum_{l <i} |g_l|}~ g_i (a_{n_1 + \cdots + n_{i-1} +1}, \ldots, a_{n_1 + \cdots+ n_i}) ,\\
& \qquad \qquad \qquad \qquad \qquad \ldots,~ \beta^{\sum_{l=1}^{k-1} |g_l|} g_k (a_{n_1 + \cdots + n_{k-1} + 1}, \ldots, a_{n_1 + \cdots + n_k}) \big),
\end{align*}
for $f \in \mathcal{O}(k),~ g_i \in \mathcal{O}(n_i)$ and $a_1, \ldots, a_{n_1 + \cdots + n_k} \in A$.

\begin{remark}
When $\alpha = \beta$, one recovers the operad considered in \cite{das2} associated to any vector space $A$ with a linear map $\alpha$. In particular, when $\alpha$ and $\beta$ are both identity map, one gets our favourite endomorphism operad. In future, we plan to study this bi-twisted endomorphism operad in more details.
\end{remark}

Note that, the corresponding degree $-1$ graded Lie bracket on $C^\bullet_{\alpha, \beta} (A, A)$ is given by
\begin{align*}
[f , g] = f \circ g - (-1)^{(m-1)(n-1)} g \circ f,
\end{align*}
where 
\begin{align}\label{circ-operation}
~&(f \circ g)(a_1, \ldots, a_{m+n-1}) := \{f\} \{g \} (a_1, \ldots, a_{m+n-1})  \nonumber \\
~&= \sum_{i=1}^{m} (-1)^{(n-1)(i-1)}~ f (\alpha^{n-1} a_1, \ldots, \alpha^{n-1} a_{i-1}, g (a_i, \ldots, a_{i+n-1}), \beta^{n-1} a_{i+n}, \ldots, \beta^{n-1} a_{m+n-1} ),
\end{align}
for $f \in C^m_{\alpha, \beta} (A, A), ~ g \in C^n_{\alpha, \beta} (A, A)$ and $a_1, \ldots, a_{m+n-1} \in A$.

\subsection{Cohomology as Gerstenhaber algebra}\label{cohomo-bihom}
Let $(A, \mu, \alpha, \beta)$ be a bihom-associative algebra. Consider the operad $C^\bullet_{\alpha, \beta} (A, A)$ as given by Proposition \ref{hom-ass-operad}. Note that $C^\bullet_{\alpha, \beta} (A, A) = C^\bullet_{\mathrm{Hoch}} (A, A)$ the space of Hochschild cochains.  The multiplication $\mu \in C^2_{\alpha, \beta} (A, A)$. Moreover, we have
\begin{align*}
\{\mu\} \{ \mu \} (a,b,c) =~& \gamma_{\alpha, \beta} (\mu; \mu, \text{id}) (a,b,c) - \gamma_{\alpha, \beta} (\mu; \text{id}, \mu) (a,b,c) \\
=~& \mu ( \mu(a,b), \beta (c)) - \mu (\alpha (a), \mu (b,c)) = 0, ~~ \text{ for all } a, b, c \in A.
\end{align*}
Therefore, $\mu$ defines a multiplication on the operad $C^\bullet_{\alpha, \beta} (A,A).$
The corresponding dot product (which we denote by $\cup_{\alpha, \beta}$) on $C^\bullet_{\alpha, \beta} (A, A)$ is given by
\begin{align}\label{bihom-cup}
 (f \cup_{\alpha, \beta} g)(a_1, \ldots, a_{m+n}) = (-1)^{mn} ~\mu (f(\alpha^{n-1}a_1, \ldots, \alpha^{n-1} a_m), g (\beta^{m-1}a_{m+1}, \ldots, \beta^{m-1}a_{m+n})  ),
 \end{align}
for $f \in C^m_{\alpha, \beta} (A, A), ~g \in C^n_{\alpha, \beta} (A,A)$ and $a_1, \ldots, a_{m+n} \in A$. Moreover, the differential (which we denote by $d_{\alpha, \beta}$) is given by
\begin{align}\label{two-differential}
 d_{\alpha, \beta} f := \mu \circ f - (-1)^{|f|} f \circ \mu = (-1)^{|f| + 1}~ \delta_{\mathrm{Hoch}} (f),
 \end{align}
where $\delta_{\mathrm{Hoch}}$ is the Hochschild coboundary map defined in (\ref{bihom-diff}).
The last identity follows from an easy calculation. This in particular shows that $(\delta_{\mathrm{Hoch}})^2 = 0.$ Note that
the cohomology induced by the differentials $d_{\alpha, \beta}$ and $\delta_{\mathrm{Hoch}}$ are isomorphic since these
differentials coincide up to a sign.

\medskip

Therefore, by considering the operations on the Hochschild cochain complex $C^\bullet_{\mathrm{Hoch}} (A, A)$ which are induced by the operad with multiplication, one obtains the following. (Compare with Theorem \ref{gers-voro-thm} and Remark \ref{gers-voro-rem}.)

\begin{thm}
	Let $(A, \mu, \alpha, \beta)$ be a bihom-associative algebra. Then its Hochschild cochain complex $C^\bullet_{\mathrm{Hoch}} (A, A)$ carries a homotopy $G$-algebra structure. Hence, its Hochschild cohomology $H^\bullet_{\mathrm{Hoch}} (A, A)$ inherits a Gerstenhaber algebra structure.
\end{thm}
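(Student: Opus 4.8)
The plan is to show that everything follows from Theorem \ref{gers-voro-thm} together with the operad structure already established in Proposition \ref{hom-ass-operad} and the observation (verified in subsection \ref{cohomo-bihom}) that $\mu$ is a multiplication on the operad $C^\bullet_{\alpha,\beta}(A,A)$. In other words, the theorem is essentially an assembly of pieces, and the proof proposal is really a proof of how those pieces fit together.

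First I would recall, from Proposition \ref{hom-ass-operad}, that the graded vector space $C^\bullet_{\alpha,\beta}(A,A) = \oplus_{k\geq 1} C^k_{\alpha,\beta}(A,A)$, with the partial compositions $\circ_i$, forms a non-$\sum$ operad with identity element $\text{id} \in C^1_{\alpha,\beta}(A,A)$. Second, I would verify that $\mu \in C^2_{\alpha,\beta}(A,A)$ genuinely lies in the operad: the conditions $\alpha \circ \mu = \mu \circ \alpha^{\otimes 2}$ and $\beta \circ \mu = \mu \circ \beta^{\otimes 2}$ are precisely the multiplicativity hypotheses in Definition \ref{defn-bihom}. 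Third, I would record the computation already displayed in subsection \ref{cohomo-bihom}, namely that $\{\mu\}\{\mu\}(a,b,c) = \mu(\mu(a,b),\beta(c)) - \mu(\alpha(a),\mu(b,c))$, which vanishes exactly because of the bihom-twisted associativity (\ref{hom-ass-cond}). Hence $\mu \circ \mu = 0$, so $\mu$ is a multiplication on the operad in the sense of the relevant definition.

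With these facts in place, Theorem \ref{gers-voro-thm} applies verbatim: the data $(C^\bullet_{\alpha,\beta}(A,A), \{~\}\{~,\ldots,~\}, \cup_{\alpha,\beta}, d_{\alpha,\beta})$ form a homotopy $G$-algebra, where $\cup_{\alpha,\beta}$ is the dot product induced by $\mu$ (explicitly (\ref{bihom-cup})) and $d_{\alpha,\beta} f = \mu \circ f - (-1)^{|f|} f \circ \mu$ is the induced differential. Since $C^\bullet_{\alpha,\beta}(A,A) = C^\bullet_{\mathrm{Hoch}}(A,A)$ and, by (\ref{two-differential}), $d_{\alpha,\beta} f = (-1)^{|f|+1}\delta_{\mathrm{Hoch}}(f)$, the differentials agree up to sign, so the cohomology of the homotopy $G$-algebra is exactly $H^\bullet_{\mathrm{Hoch}}(A,A)$. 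Finally, Remark \ref{gers-voro-rem} says that the cohomology of any homotopy $G$-algebra carries an induced graded commutative associative product together with the degree $-1$ graded Lie bracket (\ref{lie-brckt}), and these satisfy the graded Leibniz rule; thus $H^\bullet_{\mathrm{Hoch}}(A,A)$ is a Gerstenhaber algebra.

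There is no serious obstacle here, since the hard analytic content has been front-loaded into Proposition \ref{hom-ass-operad} (associativity of the $\circ_i$, where the bookkeeping with powers of $\alpha$ and $\beta$ is genuinely delicate) and into the cited Gerstenhaber--Voronov theorem. The only point that deserves care in writing up is making sure the sign discrepancy between $d_{\alpha,\beta}$ and $\delta_{\mathrm{Hoch}}$ does not affect the conclusion: the two complexes have identical cocycles and coboundaries in each degree, so the cohomology groups and all induced operations are literally the same. I would therefore present the proof as a short paragraph invoking Proposition \ref{hom-ass-operad}, the computation $\{\mu\}\{\mu\} = 0$, Theorem \ref{gers-voro-thm}, and Remark \ref{gers-voro-rem} in that order.
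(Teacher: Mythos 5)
Your proposal is correct and follows exactly the route the paper takes: it assembles Proposition \ref{hom-ass-operad}, the verification that $\{\mu\}\{\mu\}=0$ via the bihom-twisted associativity, Theorem \ref{gers-voro-thm}, the sign identification (\ref{two-differential}), and Remark \ref{gers-voro-rem}. The paper presents the same chain of reasoning in subsection \ref{cohomo-bihom} immediately before stating the theorem, so there is nothing to add.
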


\begin{remark}
When $\alpha = \beta$, one recovers the Gerstenhaber algebra structure on the Hochschild cohomology of a hom-associative algebra \cite{das2} (see also \cite{das1}). In particular, if $\alpha = \beta = \text{id}$, one gets the classical result that the Hochschild cohomology of an associative algebra inherits a Gerstenhaber algebra structure \cite{gers-voro, gers}.
\end{remark}

\section{Formal deformations}\label{sec-def}
In this section, we study formal $1$-parameter deformation of bihom-associative algebras. The results of this section are similar to classical results about deformation of associative algebras \cite{gers-def}.

Let $A = (A, \mu, \alpha, \beta)$ be a bihom-associative algebra. A $1$-parameter formal deformation of $A$ is defined by a $\mathbb{K}[[t]]$-bilinear map $\mu_t : A [[t]] \times A [[t]] \rightarrow A[[t]]$ of the form 
$\mu_t = \sum_{i \geq 0}  \mu_i t^i$
where each $\mu_i \in C^2_{\mathrm{Hoch}} (A, A)$  with $\mu_0 = \mu$ such that the following holds
\begin{align*}
\mu_t (\alpha(a), \mu_t (b,c)) = \mu_t (\mu_t (a,b), \beta (c)), ~~~~ \text{ for all } a, b, c \in A.
\end{align*}
This is equivalent to a system of equations: for $n \geq 0,$
\begin{align}\label{deformation-eqn}
\sum_{i+j = n}  \bigg(  \mu_i (\alpha (a), \mu_j (b,c))  - \mu_i (\mu_j (a,b), \beta (c)) \bigg) = 0, ~~~ ~~~~ \text{ for all } a, b, c \in A.
\end{align}

\begin{remark}\label{remark-def-reln}
In terms of $\circ$ operations as in (\ref{circ-operation}), these system reads as $\sum_{i+j = n} \mu_i \circ \mu_j = 0$, for all $n \geq 0$.
\end{remark}

For $n = 0$, we get the given bihom-associativity of $\mu$. For $n=1$, we get the following.
\begin{lemma}
If $\mu_t = \sum_{i \geq 0} \mu_i t^i $ is a deformation of the bihom-associative algebra $A$, then $\mu_1$ is a Hochschild $2$-cocycle (i.e., $\delta_{\mathrm{Hoch}} (\mu_1) = 0$).
\end{lemma}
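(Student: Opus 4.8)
The plan is to extract the $n=1$ case of the deformation equation \eqref{deformation-eqn} and identify it, up to sign, with the Hochschild coboundary of $\mu_1$. Writing out \eqref{deformation-eqn} for $n=1$ and recalling $\mu_0 = \mu$, the sum over $i+j=1$ has exactly two terms, $(i,j)=(0,1)$ and $(i,j)=(1,0)$, giving
\[
\mu \big( \alpha(a), \mu_1(b,c) \big) - \mu \big( \mu_1(a,b), \beta(c) \big) + \mu_1 \big( \alpha(a), \mu(b,c) \big) - \mu_1 \big( \mu(a,b), \beta(c) \big) = 0,
\]
for all $a,b,c \in A$.

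Next I would compare this with the explicit formula \eqref{bihom-diff} for $\delta_{\mathrm{Hoch}}$ applied to $f = \mu_1 \in C^2_{\mathrm{Hoch}}(A,A)$, i.e.\ the case $n=2$:
\[
(\delta_{\mathrm{Hoch}} \mu_1)(a,b,c) = \mu \big( \alpha(a), \mu_1(b,c) \big) - \mu_1 \big( \mu(a,b), \beta(c) \big) + \mu_1 \big( \alpha(a), \mu(b,c) \big) - \mu \big( \mu_1(a,b), \beta(c) \big).
\]
The four terms here are precisely the four terms of the $n=1$ deformation equation (the two middle signs come from $(-1)^i$ with $i=1,2$, and the last from $(-1)^{n+1}=(-1)^3$). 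Hence the $n=1$ equation says exactly $(\delta_{\mathrm{Hoch}} \mu_1) = 0$, so $\mu_1$ is a $2$-cocycle. Alternatively, and more cleanly, one can invoke Remark \ref{remark-def-reln}: the system for $n=1$ reads $\mu_0 \circ \mu_1 + \mu_1 \circ \mu_0 = 0$, i.e.\ $\mu \circ \mu_1 + \mu_1 \circ \mu = 0$; since $|\mu_1| = 1$, this is $\mu \circ \mu_1 - (-1)^{|\mu_1|} \mu_1 \circ \mu = 0$, which by \eqref{two-differential} is $d_{\alpha,\beta}(\mu_1) = (-1)^{|\mu_1|+1} \delta_{\mathrm{Hoch}}(\mu_1) = \delta_{\mathrm{Hoch}}(\mu_1)$, so $\delta_{\mathrm{Hoch}}(\mu_1) = 0$.

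There is essentially no obstacle here: the statement is a direct matching of terms once the indices are unwound, and the sign bookkeeping is the only thing requiring any care. I would present the argument via Remark \ref{remark-def-reln} and equation \eqref{two-differential}, since that route makes the sign transparent and avoids rewriting \eqref{bihom-diff} by hand, and then add a one-line remark that one may equally verify it directly from \eqref{deformation-eqn} and \eqref{bihom-diff}.
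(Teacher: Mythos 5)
Your proposal is correct and matches the paper's (essentially implicit) argument: the paper simply observes that the lemma is the $n=1$ case of the system \eqref{deformation-eqn}, which after matching terms (or, as you note, via Remark \ref{remark-def-reln} and \eqref{two-differential}) is exactly $\delta_{\mathrm{Hoch}}(\mu_1)=0$. Your sign bookkeeping in both routes checks out.
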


The $2$-cocycle $\mu_1$ is called the infinitesimal of the deformation $\mu_t$. More generally, if $\mu_1 = \cdots = \mu_{n-1} = 0$ and $\mu_n$ is non-zero, then $\mu_n$ is a $2$-cocycle.

\begin{defn}
Two deformations $\mu_t = \sum_{i \geq 0} \mu_i t^i$ and $\mu_t' = \sum_{i \geq 0} \mu_i' t^i$ of the bihom-associative algebra $A$ are said to be equivalent if there exists a formal automorphism $\phi_t : A [[t]] \rightarrow A[[t]]$ of the form $\phi_t = \sum_{i \geq 0} \phi_i t^i$ (where $\phi_i \in C^1_{\mathrm{Hoch}} (A, A)$ with $\phi_0 = \text{id}$) such that
\begin{align*}
\phi_t (\mu_t (a,b)) = \mu_t' (\phi_t (a), \phi_t (b)), ~~~~ \text{ for all } a, b \in A.
\end{align*}
\end{defn}

This condition again leads to a system of equations: for $n \geq 0,$
\begin{align*}
\sum_{i + j = n} \phi_i (\mu_j (a, b)) = \sum_{i+j+ k = n} \mu_i' (\phi_j (a), \phi_k (b)), ~~~~ \text{ for all } a, b \in A.
\end{align*}

In particular, for $n = 1$, we obtain 
\begin{align*}
\mu_1 (a,b) - \mu_1' (a, b) = \mu (a, \phi_1 (b)) - \phi_1 (\mu(a,b)) + \mu (\phi_1 (a), b).
\end{align*}
This shows that infinitesimals corresponding to equivalent deformations are cohomologous and therefore they give rise to a same cohomology class in $H^2_{\mathrm{Hoch}} (A, A).$

\begin{defn}
A bihom-associative algebra $A$ is called rigid if any deformation of $A$ is equivalent to the trivial deformation $\mu_t = \mu$.
\end{defn}

\begin{prop}
Let $\mu_t =  \sum_{i \geq 0}  \mu_i t^i$ be a deformation of $A$ not equivalent to the trivial one. Then $\mu_t$ is equivalent to some deformation
\begin{center}
${\mu}'_t = \mu +  {\mu}'_p t^p +  {\mu}'_{p+1} t^{p+1} + \cdots $
\end{center}
in which the first non-vanishing term ${\mu}'_p$ is not a coboundary.
\end{prop}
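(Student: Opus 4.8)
The plan is to adapt Gerstenhaber's normalization argument for associative algebras: an inductive ``kill the leading cocycle'' step, forced to terminate by a $t$-adic convergence argument. Write $\mu_t = \mu + \mu_p t^p + \mu_{p+1} t^{p+1} + \cdots$, where $\mu_p$ is the first non-vanishing coefficient beyond $\mu_0 = \mu$. By the lemma above (in the more general form recorded immediately after it), $\mu_p$ is a Hochschild $2$-cocycle, so $\delta_{\mathrm{Hoch}}(\mu_p) = 0$. If $\mu_p$ is not a coboundary there is nothing to prove, so assume $\mu_p = \delta_{\mathrm{Hoch}}(\psi)$; since $C^\bullet_{\mathrm{Hoch}}(A,A)$ is by definition the complex of $\alpha,\beta$-equivariant maps, automatically $\psi \in C^1_{\mathrm{Hoch}}(A,A)$, so $\phi_t := \mathrm{id} + \psi\, t^p$ is a genuine formal automorphism with $\phi_0 = \mathrm{id}$.

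\textbf{Step 1 (one reduction step).} I would define the equivalent deformation $\mu'_t$ by $\mu'_t(a,b) := \phi_t\big(\mu_t(\phi_t^{-1}a, \phi_t^{-1}b)\big)$, so that $\phi_t(\mu_t(a,b)) = \mu'_t(\phi_t a, \phi_t b)$. Since $\phi_t$, and hence $\phi_t^{-1}$, commutes with $\alpha$ and $\beta$, a direct substitution into (\ref{hom-ass-cond}) shows that $\mu'_t$ again satisfies the deformation equations, i.e.\ is a deformation of $A$ equivalent to $\mu_t$. Expanding both sides of the equivalence relation in powers of $t$ and using $\mu_1 = \cdots = \mu_{p-1} = 0$, one finds that the coefficient of $t^i$ in $\mu'_t$ vanishes for $1 \le i \le p-1$ and that the coefficient of $t^p$ equals $\mu_p - \delta_{\mathrm{Hoch}}(\psi) = 0$; this is the order-$p$ analogue of the $n=1$ identity recorded earlier in this section, all lower cross terms being absent. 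Hence $\mu'_t = \mu + \mu'_q t^q + \cdots$ with $q > p$.

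\textbf{Step 2 (iteration and termination).} Apply Step 1 repeatedly. At each stage either the first non-vanishing term (beyond $\mu$) of the current deformation fails to be a coboundary, in which case that deformation is the desired $\mu'_t$ and we stop, or we pass to an equivalent deformation whose first non-vanishing term sits at a strictly larger order. If the process never stops it produces formal automorphisms $\phi_t^{(1)}, \phi_t^{(2)}, \dots$ with $\phi_t^{(k)} - \mathrm{id}$ divisible by $t^{p_k}$ and $p_1 < p_2 < \cdots$, so $p_k \to \infty$. The infinite left composition $\Phi_t := \cdots \circ \phi_t^{(2)} \circ \phi_t^{(1)}$ then converges in the $t$-adic topology (each coefficient of $t^n$ stabilizes after finitely many factors), is a formal automorphism with $\Phi_0 = \mathrm{id}$, and satisfies $\Phi_t(\mu_t(a,b)) = \mu(\Phi_t a, \Phi_t b)$; that is, $\mu_t$ would be equivalent to the trivial deformation $\mu$, contradicting the hypothesis. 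Hence the process terminates and yields the required $\mu'_t$.

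The only real work is the coefficient bookkeeping in Step 1 -- verifying that the change of variables leaves $\mu'_1, \dots, \mu'_{p-1}$ equal to zero and produces exactly $\mu_p - \delta_{\mathrm{Hoch}}(\psi)$ in degree $p$ -- together with the (routine) check in Step 2 that $\Phi_t$ is well defined and is itself an equivalence. Neither step is conceptually delicate once $p_k \to \infty$ is established; the role of the hypothesis ``not equivalent to the trivial one'' is precisely to rule out the non-terminating branch.
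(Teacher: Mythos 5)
Your argument is correct and is precisely the Gerstenhaber normalization-and-termination argument that the paper itself invokes without reproducing (it simply says the proof is as in the associative case of \cite{gers}); the bihom-specific points that need checking --- that $\psi$ and hence $\phi_t$ commute with $\alpha$ and $\beta$, and that the order-$p$ coefficient of the transformed deformation is $\mu_p - \delta_{\mathrm{Hoch}}(\psi)$, matching the paper's $n=1$ identity --- are all addressed. Nothing further is needed.
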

The proof is similar to the associative algebra case \cite{gers}. Hence, we do not repeat it here.
As a corollary, we obtain the following.
\begin{thm}\label{2-zero-rigid}
If $H^2_{\mathrm{Hoch}} (A, A) = 0$, then $A$ is rigid.
\end{thm}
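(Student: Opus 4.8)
The plan is to deduce rigidity directly from the previous proposition by an induction on the order of the first nonvanishing perturbation term. Suppose $\mu_t = \sum_{i \geq 0} \mu_i t^i$ is a deformation of $A$ that is not equivalent to the trivial one; I want to derive a contradiction from the hypothesis $H^2_{\mathrm{Hoch}}(A,A) = 0$. By the proposition above, $\mu_t$ is equivalent to a deformation $\mu'_t = \mu + \mu'_p t^p + \mu'_{p+1} t^{p+1} + \cdots$ in which the first nonvanishing term $\mu'_p$ is not a coboundary. On the other hand, by the lemma preceding the proposition (applied to the deformation $\mu'_t$, whose first perturbation occurs in degree $p$), the cochain $\mu'_p$ is a Hochschild $2$-cocycle, i.e. $\delta_{\mathrm{Hoch}}(\mu'_p) = 0$. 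Hence $\mu'_p$ determines a class in $H^2_{\mathrm{Hoch}}(A,A)$, and since $H^2_{\mathrm{Hoch}}(A,A) = 0$ this class is zero, meaning $\mu'_p$ is a coboundary. This contradicts the conclusion of the proposition. Therefore no such nontrivial deformation exists, and $A$ is rigid.

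The only step that requires a brief justification is that the earlier lemma indeed applies to $\mu'_p$: the lemma is stated for $\mu_1$, but its generalization to the first nonvanishing term $\mu_n$ (``if $\mu_1 = \cdots = \mu_{n-1} = 0$ and $\mu_n$ is nonzero, then $\mu_n$ is a $2$-cocycle'') is already recorded in the text immediately after the lemma, and this is exactly the form we need for $\mu'_p$. Concretely, in the deformation equation \eqref{deformation-eqn} (equivalently $\sum_{i+j=n} \mu_i \circ \mu_j = 0$ from Remark \ref{remark-def-reln}) taken at order $n = p$, all cross-terms $\mu_i \circ \mu_j$ with $0 < i,j < p$ vanish, leaving $\mu \circ \mu'_p + \mu'_p \circ \mu = 0$, which up to sign is precisely $\delta_{\mathrm{Hoch}}(\mu'_p) = 0$ by \eqref{two-differential}.

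I do not anticipate a genuine obstacle here, since all the substantive work — constructing the equivalent deformation whose leading term is not a coboundary — is carried out in the preceding proposition, which we are permitted to assume. The theorem is a formal corollary: it simply observes that ``leading term is a cocycle'' (from the deformation equation) together with ``leading term is not a coboundary'' (from the proposition) is incompatible with the vanishing of the second cohomology. Thus the proof is short, and the one point worth spelling out is the compatibility of the two inputs, namely that both statements refer to the same cochain $\mu'_p$ attached to the same equivalent deformation $\mu'_t$.
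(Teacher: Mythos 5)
Your proposal is correct and is exactly the derivation the paper intends: the theorem is stated as a corollary of the preceding proposition, and your argument (leading term of the equivalent deformation is a $2$-cocycle by the deformation equation at order $p$, hence a coboundary if $H^2_{\mathrm{Hoch}}(A,A)=0$, contradicting the proposition) is the standard way to obtain it. The extra care you take in justifying that the generalized lemma applies to $\mu'_p$ matches the remark recorded in the text after the lemma.
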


Let $A$ be an associative algebra and consider the classical Hochschild complex $(C^\bullet_{\mathrm{HochAss}} (A, A), \delta_{\mathrm{HochAss}}).$ If $\alpha , \beta : A \rightarrow A$ are two commuting algebra morphisms, then $(C^\bullet_{\mathrm{HochAss}, \alpha, \beta} (A, A), \delta_{\mathrm{HochAss}})$ is a subcomplex, where
\begin{align*}
C^n_{\mathrm{HochAss}, \alpha, \beta} (A, A) = \{ f : A^{\otimes n} \rightarrow A |~ \alpha \circ f = f \circ \alpha^{\otimes n} \text{ and } \beta \circ f = f \circ \beta^{\otimes n} \}.
\end{align*}
We denote the cohomology of this subcomplex by $H^\bullet_{\mathrm{HochAss}, \alpha, \beta} (A, A)$.
If $\alpha, \beta$ are invertible and $H^2_{\mathrm{HochAss}, \alpha, \beta} (A, A) = 0$, then we claim that the second Hochschild cohomology of the bihom-associative algebra $A = (A, \mu \circ (\alpha \otimes \beta), \alpha, \beta)$ is zero. For any $f \in C^2_{\text{HochBihom}} (A, A)$, we consider the bilinear map $f \circ (\alpha^{-1} \otimes \beta^{-1})$ on $A$. Then
\begin{align*}
\alpha \circ f \circ (\alpha^{-1} \otimes \beta^{-1}) = f \circ \alpha^{\otimes 2} \circ (\alpha^{-1} \otimes \beta^{-1}) = f \circ (\text{id} \otimes \alpha \circ \beta^{-1}) =~& f \circ ( \text{id} \otimes \beta^{-1} \circ \alpha ) \\=~& f \circ (\alpha^{-1} \otimes \beta^{-1}) \circ \alpha^{\otimes 2}.
\end{align*}
Similarly, we have $\beta \circ f \circ (\alpha^{-1} \otimes \beta^{-1}) = f \circ (\alpha^{-1} \otimes \beta^{-1}) \circ \beta^{\otimes 2}$.
 Hence $f \circ (\alpha^{-1} \otimes \beta^{-1}) \in C^2_{\text{HochAss}, \alpha, \beta} (A, A)$.
 Define a map
\begin{align*}
C^2_{\mathrm{HochBihom}} (A, A) \rightarrow C^2_{\mathrm{HochAss}, \alpha, \beta} (A, A), ~ f \mapsto f \circ (\alpha^{-1} \otimes \beta^{-1}).
\end{align*}
If $f \in C^2_{\mathrm{HochBihom}} (A, A)$ is a $2$-cocycle, then it follows that $f \circ (\alpha^{-1} \otimes \beta^{-1}) \in C^2_{\mathrm{HochAss}, \alpha, \beta} (A, A)$ is a $2$-cocycle.  Since $H^2_{\mathrm{HochAss}, \alpha, \beta} (A, A) = 0$, we have 
$f \circ (\alpha^{-1} \otimes \beta^{-1}) = \delta_{\mathrm{HochAss}} (g)$, for some $g \in C^1_{\mathrm{HochAss}} (A, A)$. In that case, $f = \delta_{\mathrm{HochBihom}} (g)$. Hence the claim follows.
Thus, the bihom-associative algebra $(A, \mu \circ (\alpha \otimes \beta), \alpha, \beta)$ is rigid.

\medskip

Let $A = (A, \mu, \alpha, \beta)$ be a bihom-associative algebra. 
Consider the space $A[[t]]/ (t^{n+1})$ of polynomials in $t$ of degree $\leq n$ with coefficients in $A$. It is a module over $\mathbb{K}[[t]]/ (t^{n+1})$. The linear maps $\alpha, \beta $ induce $\mathbb{K}[[t]]/(t^{n+1})$-linear endomorphisms of $A[[t]]/ (t^{n+1})$ in a natural way.

A deformation of order $n$ consists of a sum $\mu_t = \sum_{i = 0}^{n} \mu_i t^i$ such that $(A[[t]]/ (t^{n+1}), \mu_t, \alpha, \beta )$ is a bihom-associative algebra over $\mathbb{K}[[t]]/ (t^{n+1})$. In the following, we assume that $H^2_{\mathrm{Hoch}} (A, A) \neq 0$ so that one may obtain non-trivial deformations. Next, we consider the problem of extending a deformation of order $n$ to a deformation of order $n+1$.
Suppose there is an element $\mu_{n+1} \in C^2_{\mathrm{Hoch}} (A, A)$ such that
$${\mu}'_t = \mu_t +  \mu_{n+1} t^{n+1}$$
is a deformation of order $n+1$. Then we say that $\mu_t$ extends to a deformation of order $n+1$.

Since we assume that $\mu_t =  \sum_{i = 0}^n  \mu_i t^i$ is a deformation of order $n$, it follows from Remark \ref{remark-def-reln} that
\begin{align}\label{deform-rel}
\mu \circ \mu_i + \mu_1 \circ \mu_{i-1} + \cdots + \mu_{i-1} \circ \mu_1 + \mu_i \circ \mu = 0, ~~~ \text{ for } i = 1, 2, \ldots, n,
\end{align}
or, equivalently, $\delta_{\mathrm{Hoch}} (\mu_i) = - \sum_{p+q = i, p, q \geq 1} \mu_p \circ \mu_q$.
For ${\mu}'_t = \mu_t +  \mu_{n+1} t^{n+1}$ to be a deformation of order $n+1$, one more condition needs to be satisfied, namely,
\begin{center}
$\mu \circ \mu_{n+1} + \mu_1 \circ \mu_{n} + \cdots + \mu_{n} \circ \mu_1 + \mu_{n+1} \circ \mu = 0.$
\end{center}
Hence $\mu_{n+1}$ must satisfy
\begin{center}
$ \delta_{\mathrm{Hoch}} (\mu_{n+1}) = - \sum_{i+j =n+1, i, j \geq 1} \mu_i \circ \mu_{j}.$
\end{center}
The right hand side of the above equation is called the obstruction to extend the deformation $\mu_t$ to a deformation of order $n+1$.

\begin{prop}
The obstruction is a Hochschild $3$-cocycle, that is,
\begin{center}
$ \delta_{\mathrm{Hoch}} \big(- \sum_{i+j =n+1, i, j \geq 1} \mu_i \circ \mu_{j} \big) = 0.$
\end{center}
\end{prop}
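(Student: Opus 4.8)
The plan is to exploit the graded Lie algebra structure on $C^\bullet_{\alpha,\beta}(A,A)$ coming from the operad with multiplication, exactly as in Gerstenhaber's original argument for associative algebras. Write $\mathrm{Ob}_{n+1} := -\sum_{i+j=n+1,\, i,j\geq 1} \mu_i \circ \mu_j$ for the obstruction cochain; it lives in $C^3_{\mathrm{Hoch}}(A,A)$. We must show $\delta_{\mathrm{Hoch}}(\mathrm{Ob}_{n+1}) = 0$. By the identity (\ref{two-differential}), it is equivalent (up to an overall sign, since $|\mathrm{Ob}_{n+1}|=2$) to show $d_{\alpha,\beta}(\mathrm{Ob}_{n+1}) = 0$, and since $d_{\alpha,\beta} f = \mu\circ f - (-1)^{|f|} f\circ\mu = [\mu,f]$ up to sign conventions, it suffices to compute the bracket of $\mu$ with $\mathrm{Ob}_{n+1}$ and show it vanishes. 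The point is that $\mathrm{Ob}_{n+1}$ is itself built out of $\circ$-compositions, so the whole computation stays inside the pre-Lie/graded-Lie formalism of Subsection \ref{subsec-operad-mul}.

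The key steps, in order: First, I would rewrite $\mathrm{Ob}_{n+1}$ and the relations (\ref{deform-rel}) in terms of the graded Lie bracket. Since all the $\mu_i$ have degree $|\mu_i| = 1$, the sign $(-1)^{|\mu_i||\mu_j|} = -1$, so $[\mu_i,\mu_j] = \mu_i\circ\mu_j + \mu_j\circ\mu_i$, and the deformation equations (\ref{deform-rel}) become $\sum_{i+j=k} \mu_i\circ\mu_j = 0$, i.e. in bracket form $\frac12\sum_{i+j=k}[\mu_i,\mu_j]=0$, for $0 \le k \le n$; in particular $[\mu,\mu_k] = -\sum_{i+j=k,\,i,j\geq 1}\mu_i\circ\mu_j$ for $1 \le k \le n$ (this is the displayed formula $\delta_{\mathrm{Hoch}}(\mu_k) = -\sum \mu_p\circ\mu_q$). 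Second, I would compute $\delta_{\mathrm{Hoch}}(\mathrm{Ob}_{n+1})$, i.e. $[\mu, \sum_{i+j=n+1}\mu_i\circ\mu_j]$, using the graded Jacobi identity for $[-,-]$ together with the graded derivation property: $[\mu, \mu_i\circ\mu_j]$ expands via the pre-Lie identity (\ref{pre-lie-iden}) and the relation between $\circ$ and $[-,-]$ into terms of the form $[\mu,\mu_i]\circ\mu_j$ and $\mu_i\circ[\mu,\mu_j]$ (plus correction terms from the non-associativity of $\circ$, which is controlled by (\ref{pre-lie-iden}) and is graded-symmetric). Third, substitute $[\mu,\mu_i] = -\sum_{p+q=i}\mu_p\circ\mu_q$ (valid since $i \le n$) into the resulting expression. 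This produces a triple sum $\sum \mu_p\circ\mu_q\circ\mu_r$ over indices $p+q+r = n+1$ with $p,q,r\geq 1$. Fourth, show this triple sum vanishes: grouping the terms and using the pre-Lie identity (\ref{pre-lie-iden}) to symmetrize $(f\circ g)\circ h - f\circ(g\circ h)$, the contributions cancel in pairs — this is the standard Gerstenhaber cancellation, and it works verbatim here because (\ref{pre-lie-iden}) is precisely the identity that makes it work, regardless of $\alpha,\beta$.

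The main obstacle — really the only place requiring care — is bookkeeping the signs and the associator correction terms in the third and fourth steps: one has to be sure that, with $|\mu_i|=1$ throughout, the graded pre-Lie identity (\ref{pre-lie-iden}) indeed arranges the triple-product terms into cancelling pairs and that no ``diagonal'' term $\mu_p\circ\mu_q\circ\mu_r$ with a repeated structure survives. However, because every $\mu_i$ sits in degree $1$ and the identities (\ref{pre-lie-iden}), (\ref{lie-brckt}) of the operad formalism are exactly those used in the classical associative case, the argument is formally identical to Gerstenhaber's; the twisting maps $\alpha,\beta$ have already been absorbed into the definition of $\circ_i$ (and hence of $\circ$, $[-,-]$, $\delta_{\mathrm{Hoch}}$) in Subsection \ref{subsec-operad-mul}, so no new phenomenon arises. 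One can therefore either carry out the short computation explicitly using the pre-Lie identity, or simply invoke the general fact that in any graded Lie algebra $(\mathfrak{g},[-,-])$ with $\mu\in\mathfrak{g}^1$ satisfying $[\mu,\mu]=0$, if $\mu_1,\dots,\mu_n\in\mathfrak{g}^1$ satisfy $[\mu,\mu_k] + \tfrac12\sum_{i+j=k,\,i,j\geq1}[\mu_i,\mu_j]=0$ for $k\le n$, then $\sum_{i+j=n+1,\,i,j\geq1}[\mu_i,\mu_j]$ is closed under $[\mu,-]$ — which is an immediate consequence of the graded Jacobi identity applied to $[\mu,[\mu_i,\mu_j]]$ and re-indexing.
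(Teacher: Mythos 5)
Your proposal is correct and follows essentially the same route as the paper: apply the (near-)derivation property of $\delta_{\mathrm{Hoch}}$ with respect to $\circ$ (the cup-product correction terms cancel in the symmetric sum over $i+j=n+1$), substitute the lower-order relations $\delta_{\mathrm{Hoch}}(\mu_k)=-\sum_{p+q=k}\mu_p\circ\mu_q$, and kill the resulting triple sum of associators by pairwise cancellation via the pre-Lie identity. Your closing reformulation in terms of $[\mu,-]$ and the graded Jacobi identity is just a bracket-language repackaging of the same computation, not a different argument.
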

\begin{proof}
For any $2$-cochains $f, g \in C^2_{\mathrm{Hoch}} (A, A)$, it is easy to see that
\begin{align*}
\delta_{\mathrm{Hoch}} (f \circ g) = f \circ \delta_{\mathrm{Hoch}} (g)~ -~ \delta_{\mathrm{Hoch}} (f) \circ g ~+~ g \cup_{\alpha, \beta} f ~-~ f \cup_{\alpha, \beta} g.
\end{align*}
(See \cite{gers} for associative case and \cite{das1} for hom-associative case.) Therefore,
\begin{align*}
\delta_{\mathrm{Hoch}} \big(- \sum_{i+j =n+1, i, j \geq 1} \mu_i \circ \mu_{j} \big) =~& - \sum_{i+j =n+1, i, j \geq 1} \big( \mu_i \circ \delta_{\mathrm{Hoch}} (\mu_j) - \delta_{\mathrm{Hoch}} (\mu_i) \circ \mu_j \big) \\
=~& \sum_{p+q+r = n+1, p, q, r \geq 1} \big(  \mu_p \circ (\mu_q \circ \mu_r) - (\mu_p \circ \mu_q) \circ \mu_r \big)\\ =~& \sum_{p+q+r = n+1, p, q, r \geq 1} A_{p, q, r}    \qquad \text{(say)}.
\end{align*}
The product $\circ$ is not associative, however, it satisfies the pre-Lie identity (\ref{pre-lie-iden}). This in particular implies that $A_{p, q, r} = 0$ whenever $q = r$. Finally, if $q \neq r$ then $A_{p, q, r} + A_{p, r, q} = 0$ by the pre-Lie identity (\ref{pre-lie-iden}). Hence we have $\sum_{p+q+r = n+1, p, q, r \geq 1} A_{p, q, r}   = 0$.
%
%
%
\end{proof}

It follows from the above proposition that the obstruction defines a cohomology class in $H^3_{\mathrm{Hoch}} (A, A)$. If this cohomology class is zero, then the obstruction is given by a coboundary (say $\delta_{\mathrm{Hoch}} (\mu_{n+1})$). In other words, ${\mu}'_t = \mu_t +  \mu_{n+1} t^{n+1}$ defines a deformation of order $n+1$.

As a summary, we get the following.
\begin{thm}\label{3-zero-extension}
If $H^3_{\mathrm{Hoch}} (A, A) = 0$, every deformation of order $n$ can be extended to a deformation of order $n+1$.
\end{thm}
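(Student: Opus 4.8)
The plan is to obtain this as an immediate corollary of the proposition just proved together with the hypothesis $H^3_{\mathrm{Hoch}}(A,A) = 0$. Let $\mu_t = \sum_{i=0}^{n} \mu_i t^i$ be a deformation of order $n$, so that the relations $(\ref{deform-rel})$ hold, and write
\[
\mathrm{Ob} := - \sum_{i+j = n+1,~ i,j \geq 1} \mu_i \circ \mu_j \in C^3_{\mathrm{Hoch}}(A,A)
\]
for the obstruction. First I would invoke the preceding proposition, which asserts that $\delta_{\mathrm{Hoch}}(\mathrm{Ob}) = 0$, i.e. $\mathrm{Ob}$ is a Hochschild $3$-cocycle. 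Since $H^3_{\mathrm{Hoch}}(A,A) = 0$ by assumption, every $3$-cocycle is a coboundary, so there exists $\mu_{n+1} \in C^2_{\mathrm{Hoch}}(A,A)$ with $\delta_{\mathrm{Hoch}}(\mu_{n+1}) = \mathrm{Ob}$.

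Next I would verify that $\mu'_t := \mu_t + \mu_{n+1} t^{n+1}$ is a deformation of order $n+1$. By Remark $\ref{remark-def-reln}$ this amounts to checking $\sum_{i+j=m} \mu'_i \circ \mu'_j = 0$ for $0 \leq m \leq n+1$, where $\mu'_i = \mu_i$ for $i \leq n$ and the top coefficient is $\mu'_{n+1} = \mu_{n+1}$. For $0 \leq m \leq n$ these are exactly the relations governing a deformation of order $n$ (the bihom-associativity of $\mu$ when $m=0$, and $(\ref{deform-rel})$ otherwise); they hold by hypothesis and are untouched by the added top-degree term, since $\mu_{n+1}$ enters $\sum_{i+j=m}\mu'_i\circ\mu'_j$ only when $m = n+1$. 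For $m = n+1$ the relation reads $\mu \circ \mu_{n+1} + \mu_{n+1} \circ \mu = - \sum_{i+j=n+1,~i,j\geq 1} \mu_i \circ \mu_j = \mathrm{Ob}$. Using $(\ref{two-differential})$ with $|\mu_{n+1}| = 1$, the left-hand side equals $d_{\alpha,\beta}(\mu_{n+1}) = \delta_{\mathrm{Hoch}}(\mu_{n+1})$, so this is precisely the defining property of $\mu_{n+1}$. Hence $\mu'_t$ is a deformation of order $n+1$ extending $\mu_t$.

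I do not expect a genuine obstacle beyond the content already packaged in the preceding proposition; the remaining work is purely bookkeeping. The two points that need care are confirming that adding the $t^{n+1}$-term leaves the lower-order deformation equations literally unchanged, and keeping track of the sign in $(\ref{two-differential})$ so that the order-$(n+1)$ equation is $\delta_{\mathrm{Hoch}}(\mu_{n+1}) = \mathrm{Ob}$ and not its negative. The argument is entirely parallel to the classical associative case \cite{gers-def, gers}.
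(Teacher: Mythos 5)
Your proposal is correct and follows exactly the paper's argument: the preceding proposition shows the obstruction $-\sum_{i+j=n+1,\,i,j\geq 1}\mu_i\circ\mu_j$ is a $3$-cocycle, the vanishing of $H^3_{\mathrm{Hoch}}(A,A)$ lets you write it as $\delta_{\mathrm{Hoch}}(\mu_{n+1})$, and then $\mu'_t=\mu_t+\mu_{n+1}t^{n+1}$ satisfies the order-$(n+1)$ equation while leaving the lower-order ones untouched. Your sign check via $d_{\alpha,\beta}(\mu_{n+1})=\mu\circ\mu_{n+1}+\mu_{n+1}\circ\mu=\delta_{\mathrm{Hoch}}(\mu_{n+1})$ for $|\mu_{n+1}|=1$ is consistent with the paper's convention, and the remaining bookkeeping you supply is exactly what the paper leaves implicit.
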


\vspace{0.2cm}

\section{Bihom-$A_\infty$-algebras}
In this section, we introduce bihom-associative algebras up to homotopy (bihom-$A_\infty$-algebras in short) and study some particular classes of these homotopy algebras. Our definition of a  bihom-$A_\infty$-algebra is purely motivated by the classical definition of an $A_\infty$-algebra introduced by Stasheff \cite{sta} and the Hochschild cohomology of bihom-associative algebras introduced in section \ref{sec-cohomology}. The last assertion will be clear in Theorems \ref{skeletal-2} and \ref{skeletal-n}.

 \begin{defn}\label{bihom-a-inf}
  A bihom-$A_\infty$-algebra consists of a graded vector space $A = \oplus A_i$ together with
	\begin{itemize}
	\item[(i)] a collection $\{ m_k | ~ 1 \leq k < \infty \}$ of linear maps
		$m_k : A^{\otimes k} \rightarrow A$ with deg~$(m_k) = k-2,$
	\item[(ii)] two commuting linear maps $\alpha, \beta : A \rightarrow A$ of degree $0$ with 
	\begin{center}
	$\alpha \circ m_k = m_k \circ \alpha^{\otimes k}$ \hspace{0.2cm} and \hspace{0.2cm} $\beta \circ m_k = m_k \circ \beta^{\otimes k}$ ~~~for all $k \geq 1$
	\end{center}
	\end{itemize}
such that for all $n \geq 1$,
\begin{align}\label{ha-eqn}
\sum_{i+j = n+1}^{} \sum_{\lambda =1}^{j} (-1)^{\lambda (i+1) + i (|a_1| + \cdots + |a_{\lambda -1 }|)} ~ m_{j} \big(  \alpha^{i-1}a_1, \ldots, \alpha^{i-1} a_{\lambda -1}, m_i ( a_{\lambda}, \ldots, a_{\lambda + i-1}),\\
 \beta^{i-1} a_{\lambda + i}, \ldots, \beta^{i-1} a_n   \big) = 0 \nonumber,
\end{align}
for $a_i \in A_{|a_i|}, ~ 1 \leq i \leq n$.
\end{defn}

It follows from the above conditions that $m_1$ is a differential and it is a graded derivation for the product $m_2$. The product $m_2$ is graded bihom-associative modulo some terms involving $m_3$, and so on. 

A dg bihom-associative algebra is a bihom-$A_\infty$-algebra with $m_k = 0$ for $k \geq 3$. Given a dg associative algebra and two commuting dg associative algebra morphisms, one can construct dg bihom-associative algebra by Yau twist.

A bihom-$A_\infty$-algebra as above is denoted by $(A, m_k, \alpha, \beta)$. When $\alpha = \beta$, one recovers $HA_\infty$-algebras recently introduced in \cite{das3}. In particular, when both of them are identity map, one get $A_\infty$-algebras. Thus, they are examples of bihom-$A_\infty$-algebras. The direct sum of two bihom-$A_\infty$-algebras is again a bihom-$A_\infty$-algebra.

An $n$-term bihom-$A_\infty$-algebra is a bihom-$A_\infty$-algebra $(A, m_k, \alpha, \beta)$ whose underlying graded vector space $A$ is concentrated in degrees from $0$ to $n-1$.
In this case, it is easy to see that $m_k = 0$, for $k > n+1.$ Thus, a $2$-term bihom-$A_\infty$-algebra is a tuple $(A_1 \xrightarrow{d} A_0, m_2, m_3, \alpha_0, \beta_0, \alpha_1 \beta_1)$ consisting of a chain complex $A:= (A_1 \xrightarrow{d} A_0)$, maps $m_2 : A_i \otimes A_j \rightarrow A_{i+j}$ and $m_3 : A_0^{\otimes 3} \rightarrow A_1$, two commuting chain maps $\alpha = (\alpha_0, \alpha_1)$ and $\beta = (\beta_0, \beta_1)$ from $A$ to $A$ satisfying a set of relations that follow from Definition \ref{bihom-a-inf}. The relations are explicitly written in \cite{das3} for the case of a $2$-term $HA_\infty$-algebra (i.e. when $\alpha = \beta$). One should get an idea from the above mentioned reference.

The results of the next two subsections are generalization of some standard results of Baez and Crans \cite{baez-crans}. In particular, our discussion is analogous to the results of \cite{das3} about $2$-term $HA_\infty$-algebras. Thus, most of the time, we only give a sketch of a proof and refer to \cite{das3}.

\subsection{Skeletal algebras}\label{subsec-skeletal}
\begin{defn}
	A $2$-term bihom-$A_\infty$-algebra $(A_1 \xrightarrow{d} A_0, m_2, m_3, \alpha_0,  \beta_0, \alpha_1, \beta_1)$ is called skeletal if $d = 0$. 
\end{defn}

Let $(A_1 \xrightarrow{0} A_0, m_2, m_3, \alpha_0, \beta_0, \alpha_1, \beta_1)$ be a skeletal $2$-term bihom-$A_\infty$-algebra. Then it follows from the arguments similar to \cite{das3} that $(A_0, m_2, \alpha_0, \beta_0)$ is a bihom-associative algebra. Moreover, the maps
\begin{align*}
m_2 : A_0 \otimes A_1 \rightarrow A_1, ~~ (a,m) \mapsto m_2 (a,m) \\
m_2 : A_1 \otimes A_0 \rightarrow A_1, ~~ (m,a) \mapsto m_2 (m, a)
\end{align*}
define a bimodule structure on $A_1$ with respect to commuting linear maps $\alpha_1, \beta_1 : A_1 \rightarrow A_1$.


\begin{thm}\label{skeletal-2}
	There is a one-to-one correspondence between skeletal $2$-term bihom-$A_\infty$-algebras and tuples $( (A, m, \alpha, \beta), M, \cdot,  \alpha_M, \beta_M , \theta)$, where $A=(A, m, \alpha, \beta)$ is a bihom-associative algebra, $(M, \cdot, \alpha_M, \beta_M)$ is a bimodule over $A$ and $\theta$ is a $3$-cocycle of the bihom-associative algebra $A$ with coefficients in the bimodule $M$.
%
\end{thm}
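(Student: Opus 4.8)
The plan is to unfold Definition~\ref{bihom-a-inf} in the two-term skeletal case and to match the resulting data and relations, piece by piece, with the triple $\big((A,m,\alpha,\beta),\,(M,\cdot,\alpha_M,\beta_M),\,\theta\big)$.

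Starting from a skeletal $2$-term bihom-$A_\infty$-algebra $(A_1\xrightarrow{0}A_0,m_2,m_3,\alpha_0,\beta_0,\alpha_1,\beta_1)$, I would set $A:=A_0$ with product $m:=m_2|_{A_0\otimes A_0}$ and structure maps $\alpha:=\alpha_0$, $\beta:=\beta_0$; set $M:=A_1$, with $m_2|_{A_0\otimes A_1}$ and $m_2|_{A_1\otimes A_0}$ as the left and right actions and $\alpha_M:=\alpha_1$, $\beta_M:=\beta_1$; and set $\theta:=m_3:A_0^{\otimes 3}\to A_1$. That $(A,m,\alpha,\beta)$ is a bihom-associative algebra and $(M,\cdot,\alpha_M,\beta_M)$ an $A$-bimodule has already been recorded above: since $m_1=0$ all $m_1$-terms drop out of (\ref{ha-eqn}), and the degrees of the arguments force the $n=3$ relation to split into strict bihom-associativity of $m_2$ on $A_0$ (all arguments in $A_0$) and into the three module axioms (exactly one argument in $A_1$). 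The only new remark at this stage is that condition (ii) of Definition~\ref{bihom-a-inf} for $k=3$, restricted to $A_0^{\otimes 3}$, reads $\alpha_1\circ m_3=m_3\circ\alpha_0^{\otimes 3}$ and $\beta_1\circ m_3=m_3\circ\beta_0^{\otimes 3}$, i.e. $\theta\in C^3_{\mathrm{Hoch}}(A,M)$.

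Next I would check that $\theta$ is a cocycle using the $n=4$ instance of (\ref{ha-eqn}). For degree reasons $m_4=0$, so the pairs $(i,j)=(1,4)$ and $(4,1)$ contribute nothing (both because $m_1=0$ and $m_4=0$), and a nonzero contribution forces all four arguments to lie in $A_0$, since any $A_1$-argument pushes an $m_3$- or $m_2$-then-$m_3$-term into $A_{\ge 2}=0$. The surviving $(i,j)=(3,2)$ and $(2,3)$ terms, evaluated on $a_1,\dots,a_4\in A_0$ and with $\alpha^0=\beta^0=\mathrm{id}$, are precisely the five summands of $(\delta_{\mathrm{Hoch}}m_3)(a_1,a_2,a_3,a_4)$ from (\ref{bihom-diff}) (the outer $\mu$'s being read as the left, resp.\ right, $A$-action on $M$), so $\delta_{\mathrm{Hoch}}(\theta)=0$. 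Matching the $A_\infty$-signs $(-1)^{\lambda(i+1)+i(|a_1|+\cdots+|a_{\lambda-1}|)}$ against the $(-1)^i$ of (\ref{bihom-diff}) is the only genuinely error-prone point, but since every argument has degree $0$ the degree-dependent part of the sign vanishes and the comparison is mechanical.

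Conversely, given $\big((A,m,\alpha,\beta),(M,\cdot,\alpha_M,\beta_M),\theta\big)$ I would form the complex $A_1:=M\xrightarrow{0}A_0:=A$, let $m_2$ be $m$ on $A_0^{\otimes 2}$, the two actions on the mixed factors, and $0$ on $A_1^{\otimes 2}$; let $m_3:=\theta$ on $A_0^{\otimes 3}$ and $0$ otherwise; let $m_k:=0$ for $k\ne 2,3$; and take $\alpha:=(\alpha,\alpha_M)$, $\beta:=(\beta,\beta_M)$. Condition (ii) for $m_2$ is multiplicativity of the product and the actions, and for $m_3$ it is exactly $\theta\in C^3_{\mathrm{Hoch}}(A,M)$; the relations (\ref{ha-eqn}) are vacuous for $n=1,2$, are bihom-associativity together with the bimodule axioms for $n=3$, are $\delta_{\mathrm{Hoch}}(\theta)=0$ for $n=4$, and hold trivially for $n\ge 5$ for degree reasons. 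The two constructions manifestly invert one another, since $(m_2,m_3,\alpha_i,\beta_i)$ and $(m,\cdot,\alpha,\beta,\alpha_M,\beta_M,\theta)$ are recovered from each other on the nose. Throughout I would lean on the analogous $2$-term $HA_\infty$ computation in \cite{das3}, only indicating how the two commuting maps $\alpha,\beta$ (versus a single $\alpha$) redistribute the twists $\alpha^{i-1}$ and $\beta^{i-1}$ in (\ref{ha-eqn}).
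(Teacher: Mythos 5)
Your proposal is correct and follows essentially the same route as the paper: the paper's proof likewise identifies $(A_0,m_2,\alpha_0,\beta_0)$ as the bihom-associative algebra, $(A_1,m_2,\alpha_1,\beta_1)$ as the bimodule, and $m_3$ as the $3$-cocycle, with the converse given by the obvious reassembly; the paper merely leaves the degree and sign bookkeeping to the analogous computation in \cite{das3}, which you have spelled out explicitly (and correctly, e.g.\ the vanishing of $m_4$ and the reduction of the $n=4$ relation to $\delta_{\mathrm{Hoch}}(\theta)=0$ on $A_0^{\otimes 4}$).
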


\begin{proof}
	Let $(A_1 \xrightarrow{0} A_0, m_2, m_3, \alpha_0,  \beta_0, \alpha_1, \beta_1)$ be a skeletal $2$-term bihom-$A_\infty$-algebra. 
Then it follows that
	\begin{center}
	$ m_3 : A_0 \otimes A_0 \otimes A_0 \rightarrow A_1$
	\end{center}
	defines a $3$-cocycle of the bihom-associative algebra $(A_0, m_2, \alpha_0, \beta_0)$ with coefficients in the bimodule $(A_1, m_2, \alpha_1, \beta_1)$.
	
	Conversely, given a tuple $((A, m, \alpha, \beta), M, \cdot, \alpha_M, \beta_M, \theta)$ as in the statement, define $A_0 = A$, $\alpha_0 = \alpha$, $\beta_0 = \beta$ and $A_1= M$, $\alpha_1 = \alpha_M, \beta_1 = \beta_M$. We define multiplications $m_2 : A_i \otimes A_j \rightarrow A_{i+j}$ and $m_3 : A_0 \otimes A_0 \otimes A_0 \rightarrow A_{1}$ by
	\begin{align*}
	m_2 (a,b) = m (a,b),~~~
	m_2 (a,m) = a \cdot m,~~~
	m_2 (m,a) = m \cdot a ~~ \text{ and } ~~
	m_3 (a,b,c) = \theta (a,b,c),
	\end{align*}
	for  $a, b, c \in A_0 = A$ and $m \in A_1 = M.$ Then $(A_1 \xrightarrow{0} A_0, m_2, m_3, \alpha_0, \beta_0,  \alpha_1, \beta_1)$ is a skeletal $2$-term bihom-$A_\infty$-algebra.
\end{proof}

More generally, we can consider those $n$-term bihom-$A_\infty$-algebras whose underlying chain complexes are of the form
\begin{align}\label{n-term-com}
( A_{n-1} \xrightarrow{0} 0 \xrightarrow{0} \cdots \xrightarrow{ 0} 0 \xrightarrow{0} A_0 ).
\end{align}
That is, non-zero terms are concentrated in degrees $0, n-1$ and the differential is zero. The degree $0$ linear maps $\alpha, \beta$ consist of components $\alpha_0 , \beta_0 : A_0 \rightarrow A_0$ and $\alpha_{n-1}, \beta_{n-1} : A_{n-1} \rightarrow A_{n-1}$. Moreover, the multiplications $m_k$'s are non-zero only for $k=2, n+1$. It follows from Definition \ref{bihom-a-inf} that $A = (A_0, m_2, \alpha_0, \beta_0)$ forms a bihom-associative algebra and the maps $m_2 : A_0 \otimes A_{n-1} \rightarrow A_{n-1}$, $m_2 : A_{n-1} \otimes A_0 \rightarrow A_{n-1}$ define a bimodule structure on $(A_{n-1}, \alpha_{n-1}, \beta_{n-1}).$


Then we have the following. The proof is similar to Theorem \ref{skeletal-2} and hence we do not repeat it here.

\begin{thm}\label{skeletal-n}
	There is a one-to-one correspondence between $n$-term bihom-$A_\infty$-algebras whose underlying chain complexes are of the form (\ref{n-term-com}) and tuples $((A, m, \alpha, \beta), M, \cdot, \alpha_M, \beta_M, \theta)$, where $(A, m, \alpha, \beta)$ is a bihom-associative algebra, $(M, \cdot, \alpha_M, \beta_M)$ is a bimodule over $A$ and $\theta$ is an $(n+1)$-cocycle of $A$ with coefficients in the bimodule $M$.
%
\end{thm}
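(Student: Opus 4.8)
The plan is to extend the proof strategy of Theorem \ref{skeletal-2} to this more general situation, where the only change is that the second non-trivial term of the chain complex sits in degree $n-1$ rather than degree $1$. First I would unpack the defining equations (\ref{ha-eqn}) of a bihom-$A_\infty$-algebra for the chain complex (\ref{n-term-com}), observing that since the underlying graded vector space is concentrated in degrees $0$ and $n-1$, the only maps $m_k$ that can be nonzero are $m_2$ (degree $0$, the bihom-associative multiplication together with the bimodule action) and $m_{n+1}$ (which has degree $n-1$ and hence can only be the map $A_0^{\otimes(n+1)} \to A_{n-1}$). All intermediate $m_k$ vanish for degree reasons, exactly as in the $2$-term case where $n=2$. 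The differential $m_1$ is zero by the skeletal hypothesis built into (\ref{n-term-com}).

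Next I would run through the equations (\ref{ha-eqn}) for the relevant values of $N$ (using $N$ for the index to avoid clashing with the $n$ in $n$-term). For $N=2$ and $N=3$ one recovers, after the sign bookkeeping, precisely the statement that $(A_0, m_2, \alpha_0, \beta_0)$ is a bihom-associative algebra (this is the associativity relation \eqref{hom-ass-cond} with the appropriate $\alpha^{i-1}$, $\beta^{i-1}$ twists). For the equations mixing $A_0$ and $A_{n-1}$ one extracts the identities making $(A_{n-1}, m_2, \alpha_{n-1}, \beta_{n-1})$ a bimodule over $A$, together with the compatibility $\alpha_{n-1} \circ m_2 = m_2 \circ \alpha^{\otimes 2}$ and similarly for $\beta$. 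Finally, the equation (\ref{ha-eqn}) with $N = n+2$ is the only one that involves $m_{n+1}$ nontrivially together with $m_2$; reading off its inputs (which must all lie in $A_0$ for the output to be in $A_{n-1}$) and comparing with the Hochschild coboundary formula (\ref{bihom-diff}) applied to the $(n+1)$-cochain $\theta := m_{n+1}$ with coefficients in the bimodule $M = A_{n-1}$, one sees that the equation says exactly $\delta_{\mathrm{Hoch}}(\theta) = 0$, i.e. $\theta$ is an $(n+1)$-cocycle. The signs match because, as emphasized in subsection \ref{cohomo-bihom}, the Hochschild differential is induced by the operad with multiplication, and the braces (\ref{gers-voro-brace}) carry precisely the Koszul signs appearing in (\ref{ha-eqn}). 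The converse direction is then immediate: given $((A, m, \alpha, \beta), M, \cdot, \alpha_M, \beta_M, \theta)$, one sets $A_0 = A$, $A_{n-1} = M$, defines $m_2$ from $m$ and the actions $\cdot$, sets $m_{n+1} = \theta$ and all other $m_k = 0$, and checks that (\ref{ha-eqn}) reduces to the three families of identities just listed, all of which hold by hypothesis.

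The main obstacle is purely the sign comparison in the equation (\ref{ha-eqn}) with $N = n+2$: one must verify that the sign $(-1)^{\lambda(i+1) + i(|a_1| + \cdots + |a_{\lambda-1}|)}$ appearing there, once specialized to $i \in \{2, n+1\}$ and to inputs $a_j \in A_0$ (so that $|a_j| = 0$ and the degree-dependent part of the sign drops out), reproduces the alternating sign $(-1)^\lambda$ of the Hochschild differential (\ref{bihom-diff}) — together with the correct placement of the powers $\alpha^{i-1}$, $\beta^{i-1}$, which match the $\alpha^{n-1}(a_1)$ in the first term and $\beta^{n-1}(a_{n+1})$ in the last term of (\ref{bihom-diff}). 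This is a routine but slightly delicate check. Since the whole argument is formally parallel to the proof of Theorem \ref{skeletal-2} (which is the case $n=2$), and since the analogous $HA_\infty$ statement is carried out in detail in \cite{das3}, I would present only the above sketch and refer the reader to \cite{das3} and to the proof of Theorem \ref{skeletal-2} for the remaining verifications.
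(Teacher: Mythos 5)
Your proposal is correct and follows essentially the same route as the paper, which itself only observes that $m_2$ gives the bihom-associative structure on $A_0$ and the bimodule structure on $A_{n-1}$, identifies $m_{n+1}$ with an $(n+1)$-cocycle via the equation involving $m_2$ and $m_{n+1}$, and otherwise refers back to the proof of Theorem \ref{skeletal-2}. Your degree argument for why only $m_2$ and $m_{n+1}$ survive, and your isolation of the sign check in the $N=n+2$ equation, are exactly the verifications the paper leaves implicit.
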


\subsection{Strict algebras}\label{subsec-strict}

\begin{defn}
	A $2$-term bihom-$A_\infty$-algebra $(A_1 \xrightarrow{d} A_0, m_2, m_3, \alpha_0,  \beta_0, \alpha_1, \beta_1)$ is called strict if $m_3 = 0$.
\end{defn}

\begin{exam}\label{exam-strict}
Let $(A, m, \alpha, \beta)$ be a bihom-associative algebra. Take $A_0 = A_1 = A$, $d =~$id,~$m_2 = m$, $\alpha_0 = \alpha_1 = \alpha$ and $\beta_0 = \beta_1 = \beta$. Then $(A_1 \xrightarrow{d} A_0, m_2, m_3 = 0, \alpha_0,  \beta_0, \alpha_1, \beta_1)$ is a strict $2$-term bihom-$A_\infty$-algebra.
\end{exam}

In the following, we introduce crossed module of bihom-associative algebras and show that they are related to strict $2$-term bihom-$A_\infty$-algebras.
\begin{defn}
	A crossed module of bihom-associative algebras consists of a quadruple 
	\begin{center}
	$((A, m_A, \alpha_A, \beta_A), (B, m_B, \alpha_B, \beta_B), dt, \phi)$ 
	\end{center}
	where $(A, m_A, \alpha_A, \beta_A)$ and $(B, m_B, \alpha_B, \beta_B)$ are bihom-associative algebras, $dt : A \rightarrow B$ is a bihom-associative algebra morphism and
	\begin{align*}
	\phi : B \otimes A \rightarrow A,& ~ (b, m) \mapsto \phi (b ,m), \\
	\phi : A \otimes B \rightarrow A,& ~ (m, b) \mapsto \phi(m, b)
	\end{align*}
	defines a $B$-bimodule structure on $A$ with respect to linear maps $\alpha_A$ and $\beta_A$, such that for $ m, n \in A,~ b \in B$,
	\begin{align*}
	dt ( \phi (b, m)) =~& m_B (b, dt(m)),\\
	dt ( \phi(m, b)) = ~& m_B (dt (m), b  ), \\
	\phi ({dt(m)} , n) =~& m_A (m, n),\\
	\phi(m, dt(n)) =~& m_A (m, n),\\
	\phi (  \alpha_B (b), m_A (m, n)) =~& m_A (\phi(b, m), \beta_A (n)),\\
	\phi (  m_A (m,n), \beta_B (b)) =~& m_A (\alpha_A (m), \phi(n,b)).
	\end{align*}
\end{defn}

When $\alpha_A = \beta_A$ and $\alpha_B = \beta_B$, one gets the crossed module of hom-associative algebras \cite{das3}. In particular, when they are identity, one gets the crossed module of associative algebras.

\begin{thm}\label{strict-crossed-mod}
	There is a one-to-one correspondence between strict $2$-term bihom-$A_\infty$-algebras and crossed modules of bihom-associative algebras.
\end{thm}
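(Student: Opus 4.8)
The plan is to establish the correspondence in both directions by carefully unwinding the defining equations of a $2$-term bihom-$A_\infty$-algebra in the strict case ($m_3 = 0$) and matching them against the axioms of a crossed module. First I would fix notation: given a strict $2$-term bihom-$A_\infty$-algebra $(A_1 \xrightarrow{d} A_0, m_2, 0, \alpha_0, \beta_0, \alpha_1, \beta_1)$, I set $B = A_0$ with $m_B = m_2|_{A_0 \otimes A_0}$, $\alpha_B = \alpha_0$, $\beta_B = \beta_0$; set $A = A_1$ with $\alpha_A = \alpha_1$, $\beta_A = \beta_1$; let $dt = d : A_1 \to A_0$; and define $\phi : B \otimes A \to A$ and $\phi : A \otimes B \to A$ by the two ``mixed'' components $m_2 : A_0 \otimes A_1 \to A_1$ and $m_2 : A_1 \otimes A_0 \to A_1$. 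I also need a product $m_A$ on $A_1$; the natural choice is $m_A(m,n) = m_2(d(m), n)$ (which will equal $m_2(m, d(n))$ by one of the relations).

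Next I would write out the instances of equation (\ref{ha-eqn}) that survive when $m_3 = 0$ and the complex has only two terms, i.e. for $n = 1, 2, 3$. The $n=1$ relation gives that $d = m_1$ is a chain map (trivial here) and $\alpha, \beta$ commute with $d$. The $n=2$ relation, evaluated on the various degree combinations of inputs in $A_0, A_1$, yields: that $m_B$ is a bihom-associative product on $B$ (all inputs in $A_0$), that $d$ intertwines $m_2$ with $m_B$ — this produces $dt(\phi(b,m)) = m_B(b, dt(m))$ and $dt(\phi(m,b)) = m_B(dt(m), b)$ — and the ``Peiffer''-type identities $\phi(dt(m), n) = m_A(m,n) = \phi(m, dt(n))$. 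The $n=3$ relation (with $m_3 = 0$, so only the $m_2 \circ m_2$ terms remain) splits, according to where the single $A_1$-input sits, into: the bihom-associativity of $m_B$ again, the statement that the mixed $m_2$'s define a bihom-bimodule structure on $A_1$ over $B$ (the three axioms $\alpha_A(b\cdot m) = \alpha_B(b)\cdot\alpha_A(m)$ etc. come from the degree-$0$ constraint in Definition \ref{bihom-a-inf}(ii), and the associativity-type constraints $\alpha_B(b)\cdot(c\cdot m) = m_B(b,c)\cdot\beta_A(m)$, $\alpha_B(b)\cdot(m\cdot c) = (b\cdot m)\cdot\beta_B(c)$, $\alpha_A(m)\cdot(b\cdot\text{--})$ from the $n=3$ equation), and finally the two remaining crossed-module compatibilities $\phi(\alpha_B(b), m_A(m,n)) = m_A(\phi(b,m), \beta_A(n))$ and $\phi(m_A(m,n), \beta_B(b)) = m_A(\alpha_A(m), \phi(n,b))$, which are exactly the $n=3$ relation when two inputs lie in $A_1$ and one in $A_0$ (using $m_A(m,n) = m_2(d(m), n)$ to rewrite $m_2$ of two $A_1$-elements). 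Conversely, given a crossed module, I reverse this: define the $2$-term complex, the maps $m_2$ on all degree components via $m_B$, $\phi$, and $m_A$, set $m_3 = 0$, and verify that the finite list of crossed-module axioms reassembles into all instances of (\ref{ha-eqn}); the degree and sign bookkeeping is routine once the dictionary is fixed.

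The main obstacle, and the part requiring genuine care, is the sign and degree bookkeeping in equation (\ref{ha-eqn}): the sign $(-1)^{\lambda(i+1) + i(|a_1|+\cdots+|a_{\lambda-1}|)}$ must be tracked for each placement of the $A_1$-input among $A_0$-inputs, and one must confirm that the signs appearing in the expansion of the $n=3$ relation are consistent with defining $m_A$ by $m_2 \circ (d \otimes \mathrm{id})$ and with the crossed-module identities as written (which carry no signs, since $A$ and $B$ are ungraded there). One also needs to check the auxiliary consistency $m_2(d(m), n) = m_2(m, d(n))$ so that $m_A$ is well-defined, and that $m_A$ is itself bihom-associative on $A$ — but this follows formally from the Peiffer identities together with $d$ being an algebra map, exactly as in the classical and hom-associative cases. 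Since the entire argument is a componentwise specialization of Definition \ref{bihom-a-inf} parallel to the hom-associative treatment in \cite{das3}, I would present the dictionary explicitly, indicate which value of $n$ in (\ref{ha-eqn}) yields which crossed-module axiom, and refer to \cite{das3} for the detailed sign verification rather than reproducing it.
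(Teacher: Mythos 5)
Your proposal is correct and follows essentially the same route as the paper: the same dictionary ($A = A_1$, $B = A_0$, $m_A(m,n) = m_2(dm,n) = m_2(m,dn)$, $dt = d$, $\phi$ given by the mixed components of $m_2$), with the crossed-module axioms read off from the low-degree instances of (\ref{ha-eqn}) and the detailed verification deferred to \cite{das3}. One small correction: the instance of (\ref{ha-eqn}) with two inputs in $A_1$ is vacuous (both composite terms land in $A_2 = 0$), so the last two crossed-module compatibilities are obtained, as your parenthetical in effect does, from the instances with inputs of degrees $(0,0,1)$ and $(1,0,0)$ combined with the Peiffer identities and the equivariance of $dt$.
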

\begin{proof}
	Let $(A_1 \xrightarrow{d} A_0, m_2 , m_3 = 0, \alpha_0,  \beta_0, \alpha_1, \beta_1)$ be a strict $2$-term $HA_\infty$-algebra. Take $A = A_1$ with the multiplication $m_A$ defined by $m_A (m,n) := m_2 (dm, n) = m_2 (m, dn)$, for $m, n \in A_1$, and linear transformations $\alpha_A = \alpha_1$, $\beta_A = \beta_1$.
Then similar to \cite{das3} one shows that $(A, m_A, \alpha_A, \beta_A)$ is a bihom-associative algebra. Moreover, if we take $B = A_0$ with the multiplication $m_B = m_2 : A_0 \otimes A_0 \rightarrow A_0$ and linear transformations $\alpha_B = \alpha_0$, $\beta_B = \beta_0$, then $(B, m_B, \alpha_B, \beta_B)$ is a bihom-associative algebra. Finally, we define $dt = d : A_1 \rightarrow A_0$ and
	\begin{align*}
	\phi : B \otimes A \rightarrow A, ~ (b, m) & \mapsto  \phi (b,m) = m_2 (b, m), \\
	\phi : A \otimes B \rightarrow A, ~ (m, b) & \mapsto \phi(m,b) = m_2 (m , b),  \text{ for } b \in B, m \in A.
	\end{align*}
	Then one can verify that $(A, B, dt, \phi)$ is a crossed module of bihom-associative algebras.
	
	Converse part is similar. (See \cite{das3} for instance.) Hence the proof.
\end{proof}

\begin{remark}
Note that the crossed module corresponding to the strict $2$-term bihom-$A_\infty$-algebra of Example \ref{exam-strict} is given by $((A, m, \alpha, \beta), (A, m, \alpha, \beta), \text{id}, \mu)$.
\end{remark}

\section{Concluding remarks}
We end this paper with following remarks or questions.

\begin{itemize}
\item[(1)] Hom-associative algebras (thus associative algebras) are special case of bihom-associative algebras. In \cite{hass-shap-sut} the authors consider a definition of Hochschild homology for hom-associative algebras. In the case of bihom-associative algebras, the situation is not that much transparent due to the presence of two commuting maps. We could not find such a homology for bihom-associative algebras. However, it would be interesting if one can find this.

\item[(2)] In this paper, we show that the third Hochschild cohomology of bihom-associative algebras are related to skeletal algebras. In the classical case, it is also known that the third Hochschild cohomology of an associative algebra classify $2$-fold crossed extensions (see \cite{loday}). It is natural to ask the similar question for Hochschild cohomology of bihom-associative algebras.

\item[(3)] We hope that the method of twisting $\alpha$ and $\beta$ in the operad $C^\bullet_{\alpha, \beta} (A, A)$ may help to construct cohomology theories
for other types of Loday algebras (e.g., dialgebra, dendriform algebra, associative trialgebra) twisted by two commuting morphisms that may come up in future. See \cite{yau, das4} for cohomology of various Loday algebras. The existence of homotopy $G$-algebra structure on their cochain complex can be easily shown by the method of \cite{yau} and the above twist.
\end{itemize}

\noindent {\bf Acknowledgements.} The author would like to thank both the referees for their valuable comments on the earlier version of the manuscript that have improved the exposition.
He also wishes to thank Indian Institute of Technology (IIT) Kanpur for financial support.


\begin{thebibliography}{BFGM03}

\bibitem{amm-ej-makh}
F. Ammar, Z. Ejbehi and A. Makhlouf, Cohomology and deformations of Hom-algebras, {\em J. Lie Theory} 21 (2011), no. 4, 813-836. 


\bibitem{baez-crans}
J. C. Baez and A. S. Crans, Higher-dimensional algebra. VI. Lie $2$-algebras, {\em Theory Appl. Categ.} 12 (2004) 492-538.

\bibitem{caen-goyv}
S. Caenepeel and I. Goyvaerts,
Monoidal Hom-Hopf algebras, {\em Comm. Algebra} 39 (2011), no. 6, 2216-2240.

\bibitem{das1}
A. Das, Gerstenhaber algebra structure on the cohomology of a hom-associative algebra, {\em Proc. Indian Acad. Sci. (Math. Sci.)} Vol. 130 (2020), Art. 20.

\bibitem{das2} A. Das, Homotopy $G$-algebra structure on the cochain complex of hom-type algebras, {\em C. R. Math. Acad. Sci. Paris.} 356 (2018), no. 11, 1090-1099.

\bibitem{das3} A. Das, Hom-associative algebras up to homotopy, {\em J. Algebra} 556 (2020) 836-878.

\bibitem{das4} A. Das, Deformations of Loday-type algebras and their morphisms, arxiv:1904.12366, {\em J. Pure Appl. Algebra} to appear

\bibitem{gers}
M. Gerstenhaber, The cohomology structure of an associative ring, {\em Ann. of Math.} 78 (1963) 267-288.

\bibitem{gers-def}
M. Gerstenhaber, On the deformation of rings and algebras, {\em Ann. of Math.} Vol. 79, No. 1 (1964) 59-103.


\bibitem{gers-voro}
M. Gerstenhaber and A. A. Voronov, Homotopy $G$-algebras and moduli space operad, {\em Internat. Math. Res. Notices} 1995, no. 3, 141-153.

\bibitem{graz}
G. Graziani, Group Hom-categories, Thesis, University of Ferrara, Italy, 2013.

\bibitem{gra-makh-men-pana}
G. Graziani, A. Makhlouf, C. Menini and F. Panaite,
BiHom-associative algebras, BiHom-Lie algebras and BiHom-bialgebras, {\em SIGMA Symmetry Integrability Geom. Methods Appl.} 11 (2015), Paper 086, 34 pp. 

\bibitem{hls}
J. T. Hartwig, D. Larsson and S. D. Silvestrov, Deformations of Lie algebras
using $\sigma$-derivations, {\em J. Algebra} 295 (2006), 314-361.

\bibitem{hass-shap-sut}
M. Hassanzadeh, I. Shapiro and S. S\"{u}tl\"{u}, 
Cyclic homology for Hom-associative algebras, {\em J. Geom. Phys.} 98 (2015), 40-56.

\bibitem{loday}
J.-L. Loday, Cyclic homology, Grundlehren der Mathematischen Wissenschaften, vol. 301. Springer-Verlag, Berlin (1992).

\bibitem{lod-val-book}
J.-L. Loday and B. Vallette, Algebraic operads, Grundlehren der Mathematischen Wissenschaften, vol. 346. Springer, Heidelberg (2012).

\bibitem{makh-sil}
A. Makhlouf, S. Silvestrov,
Hom-algebra structures,
{\em J. Gen. Lie Theory Appl.} 2 (2008), no. 2, 51-64. 

\bibitem{makh-sil2}
A. Makhlouf, S. Silvestrov, Notes on 1-parameter formal deformations of Hom-associative and Hom-Lie algebras, {\em Forum Math.} 22 (2010), no. 4, 715-739.

\bibitem{sta}
J. D. Stasheff, Homotopy associativity of $H$-spaces II, {\em Trans. Amer. Math. Soc.} 108 (1963) 293-312.

\bibitem{yau}
D. Yau, Gerstenhaber structure and Deligne’s conjecture for Loday algebras, {\em J. Pure Appl. Algebra} 209 (2007), no. 3, 739-752. 

\end{thebibliography}
\end{document}